\newtheorem{theorem}{Theorem}
\newtheorem{proposition}[theorem]{Proposition}%
\newtheorem{lemma}{Lemma}
\newtheorem{remark}{Remark}%
\newtheorem{assumption}{Assumption}
\newtheorem{definition}{Definition}%
\def\R{\mathbb R}
\def\N{{\mathbb N}}
\newcommand{\Lip}{\operatorname{Lip}}
\newcommand{\lip}{\operatorname{lip}}
\def\BL{{\mathsf{BL}}} 
\def\RCD{\mathsf{RCD}}
\def\d{{\partial}}
\def\i{{\infty}}
\DeclareMathOperator{\supp}{supp}
\DeclareMathOperator{\dist}{d}
\title{A step towards the tensorization of Sobolev spaces}
\author{Silvia Ghinassi\thanks{Math Department, Shoreline Community College, Shoreline, 98133, WA, USA, email: sghinassi@shoreline.edu}, Vikram Giri \thanks{Department of Mathematics, ETH, Zurich, Switzerland, email: vikramaditya.giri@math.ethz.ch}, Elisa Negrini \thanks{Department of Mathematics, University of California Los Angeles, Los Angeles, 90024, CA, USA, email: enegrini@math.ucla.edu},}
\date{}
\begin{document}

\maketitle

\begin{abstract}
We prove that Sobolev spaces on Cartesian and warped products of metric spaces tensorize, only requiring that one of the factors is a doubling space supporting a Poincar\'e inequality.
\end{abstract}

\noindent \textbf{Keywords:} Sobolev space, metric space, doubling, cartesian product, warped product. MSC2020 53C23, 46E35, 51F30.

\section{Introduction}\label{sec1}

The theory of first order Sobolev spaces in abstract metric measure spaces became an independent field of study the late 90s. Haj\l asz \cite{hajlasz1996} defined the concept of Haj\l asz gradient and subsequently the so-called Sobolev-Haj\l asz spaces.
Heinonen and Koskela introduced the concept of upper gradient to replace the distributional gradient in more abstract spaces,  \cite{HK1995,HK1996,HK1998}. Their definition requires an inequality to hold true for every rectifiable curve in the underlying space. Shanmungalingam \cite{shanmugalingam2000newtonian} introduced the concept of weak upper gradients, and the relative Newton-Sobolev spaces by allowing the requirement to fail for a null set of curves (in the sense of modulus of a curve family). In his pioneering work on calculus on metric measure spaces \cite{cheeger1999}, Cheeger introduced a relaxation procedure of Heinonen and Koskela's notion of weak upper gradients.

More recently, in the interest of studying spaces that satisfy a lower Ricci curvature bound, Lott and Villani \cite{LV2007} and (independently) Sturm \cite{sturm2006one,sturm2006two} introduced the celebrated curvature-dimension condition. In order to further study these spaces, Ambrosio, Gigli, and Savar\`e \cite{AGS2014RCD} introduced the so-called RCD spaces and a new notion of first order Sobolev spaces, suitable for a larger class of metric measure spaces. There has been a flurry of activity in the last few years, due to the interest in studying these general metric spaces with a lower curvature bound. See the brief surveys~\cite{honda, naber} for more information.

Under sufficient geometric assumptions, the notions of weak upper gradient due to Cheeger, Shanmungalingam, and Ambrosio-Gigli-Savar\`e are equivalent \cite[Theorem~7.4]{AGS2013}, while Haj\l asz gradients are not; one should think of them as the Hardy–Littlewood maximal function of upper gradients. 

The tensorization problem asks what is the relation between the Sobolev functions on two metric measure spaces $X$ and $Y$ and the ones on their product space (equipped with the product measure, and Cartesian product metric). The problem, while basic, seems to not have been explored until recently; in fact it is still open for the general case. Ambrosio, Gigli, and Savar\`e \cite[Theorem~6.13]{AGS2014RCD}, first proved the result under some restrictive condition on the curvature of \emph{both} spaces (namely the $\RCD(K,\infty)$ condition). Ambrosio, Pinamonti, and Speight \cite[Theorem~3.4]{APS2015} proved it for spaces that are doubling and support a Poincar\'e inequality, or under certain quadracity assumptions. Ambrosio, Gigli, and Savar\`e \cite{AGS2014RCD} proved that, for a general metric space, it is always true that we can control the Euclidean product of the weak upper gradients in each factor by the weak upper gradient on the product.

In this paper we prove that indeed we can control the Euclidean product of the weak upper gradients in each factor by the weak upper gradient on the product, provided \emph{one} of the factors is doubling and has a Poincar\'e inequality. We also prove that the same is true if one considers warped products of metric measure spaces. Both results, Theorem \ref{Thm3.7} and Theorem \ref{Thm5}, and their proofs, generalize the work of Gigli and Han \cite[Theorem~3.7 and Theorem~3.12]{gigli2018sobolev}, where the authors proved the conjecture when one of the factors is an interval, both for Cartesian and warped products. We remark that the proof we present is almost self contained, relying only on earlier results concerning the density of Lipschitz functions by Ambrosio, Gigli, and Savar\`e \cite{AGS2014}.

Independently, Eriksson-Bique, Soultanis, and Rajala \cite[Theorem~1.6]{eriksson2024tensorization} proved similar results to ours, but only in the Cartesian case. Their proof relies on techniques developed by the first and third author \cite{eriksson2021curvewise}.
The same authors \cite[Theorem~1.2]{EBRS2022hilbert} then also proved tensorization where both factors are quasi-Hilbertian metric spaces. 

\section{Preliminaries}

For basic definitions on metric spaces, we refer the interested reader to \cite{heinonen}. We start with a few definitions and results, which we will need to introduce the notion of Sobolev functions (Definition \ref{def:sobolev}).

\bigskip \begin{definition}[Absolutely continuous curves]
    Let $p\in[1,+\infty]$ and $\gamma \in C([0,1], X)$ and indicate $\gamma_t = \gamma(t)$ for $t\in [0,1]$. We say that $\gamma \in AC^p([0,1], X)$ if there exists $G \in L^p([0,1])$ such that
    \begin{equation}\label{distC}
        d(\gamma_s,\gamma_t) \leq \int_s^t G(r) \,dr \quad \forall t,s \in [0,1], \; s<t.
    \end{equation}
    For $p=1$ the space $AC^1([0,1], X)$ is denoted $AC([0,1], X)$ and it is the space of absolutely continuous curves.
\end{definition}

\bigskip \begin{theorem}[Theorem 1.1.2 in \cite{ambrosio2005gradient} ]
    For $\gamma\in AC([0,1], X)$ there exists an a.e. minimal function $G$ that satisfies (\ref{distC}) called the metric derivative, and it can be computed for a.e. $t\in[0,1]$ as
    $$|\dot{\gamma}_t|:= \lim_{s\rightarrow t} \frac{d(\gamma_t, \gamma_s)}{|s-t|}.$$
\end{theorem}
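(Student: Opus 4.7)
The plan is to reduce this metric-space statement to the one-dimensional setting by testing $\gamma$ against real-valued distance functions. The key observation is that for any fixed $x \in X$, the scalar function $\varphi_x(t) := d(\gamma_t, x)$ is itself absolutely continuous on $[0,1]$, since the reverse triangle inequality combined with the hypothesis gives $|\varphi_x(t) - \varphi_x(s)| \leq d(\gamma_s, \gamma_t) \leq \int_s^t G(r)\,dr$. Classical real analysis then provides that each $\varphi_x$ is differentiable a.e.\ with $|\varphi_x'| \leq G$ a.e.

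Since $\gamma([0,1])$ is separable (continuous image of a separable space), I would pick a countable dense subset $\{x_n\} \subset \gamma([0,1])$ and define the candidate
\[
m(t) := \sup_{n} |\varphi_{x_n}'(t)|,
\]
which is measurable and a.e.\ bounded by $G$. Density of $\{x_n\}$ inside $\gamma([0,1])$ allows one to recover $d(\gamma_s, \gamma_t) = \sup_n |\varphi_{x_n}(t) - \varphi_{x_n}(s)|$: picking $x_{n_k} \to \gamma_s$ gives $\varphi_{x_{n_k}}(s) \to 0$ and $\varphi_{x_{n_k}}(t) \to d(\gamma_s, \gamma_t)$. The trivial estimate $\sup_n \int \leq \int \sup_n$ then shows that $m$ itself is admissible in \eqref{distC}, and minimality is immediate: any other admissible $\widetilde G$ dominates each $|\varphi_{x_n}'|$ a.e.\ by the first step, hence dominates $m$ a.e.

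To identify $m(t)$ with the claimed pointwise metric derivative, I would apply Lebesgue's differentiation theorem to $t \mapsto \int_0^t m(r)\,dr$ to deduce $\limsup_{s \to t} d(\gamma_s,\gamma_t)/|s-t| \leq m(t)$ at its Lebesgue points. For the matching lower bound, at any simultaneous differentiability point $t$ for all $\varphi_{x_n}$ (a countable intersection of full-measure sets, hence still full measure), one has
\[
|\varphi_{x_n}'(t)| = \lim_{s \to t} \frac{|\varphi_{x_n}(t) - \varphi_{x_n}(s)|}{|s-t|} \leq \liminf_{s \to t} \frac{d(\gamma_s,\gamma_t)}{|s-t|},
\]
and taking the supremum in $n$ sandwiches the limit and shows it equals $m(t)$.

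The only genuinely subtle point, and the one I expect to be the main obstacle, is the density step used to recover $d(\gamma_s, \gamma_t)$ as a supremum over the fixed countable family $\{x_n\}$ \emph{simultaneously} for all pairs $s, t$. The probes must be chosen dense in the image $\gamma([0,1])$ (not merely in $X$, which might be non-separable), and the measurability and minimality of $m$ crucially depend on this countability. Once this bookkeeping is handled, the remainder is a standard assembly of absolute continuity, Lebesgue differentiation, and aggregation of countably many null sets.
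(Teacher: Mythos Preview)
The paper does not prove this statement; it is quoted verbatim as Theorem 1.1.2 of \cite{ambrosio2005gradient} and used as background. Your argument is correct and is in fact the standard proof given in that reference: reduce to scalar absolutely continuous functions $t\mapsto d(\gamma_t,x_n)$ for a countable dense set $\{x_n\}\subset\gamma([0,1])$, take the pointwise supremum of their a.e.\ derivatives, and sandwich using Lebesgue differentiation. The concern you flag about the density step is not a genuine obstacle: for each fixed pair $(s,t)$ one only needs a sequence $x_{n_k}\to\gamma_s$, which the density of $\{x_n\}$ in the image provides, and no uniformity over pairs is required since the resulting inequality $d(\gamma_s,\gamma_t)\le\int_s^t m(r)\,dr$ is established pair by pair.
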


\bigskip \begin{proposition}[\cite{burago2022course}]
    The length of a curve $\gamma \in AC([0,1], X)$ is given by:
$$l[\gamma] := \int_0^1 |\dot{\gamma}_t| \,dt.$$
If $(X,d)$ is a length space then for any $x,y\in X$
$$\dist(x,y) = \inf \left\{\; \int_0^1 |\dot{\gamma}_t| \,dt;\quad \gamma \in AC([0,1], X) \text{ connecting } x \text{ and } y\; \right\}.$$
\end{proposition}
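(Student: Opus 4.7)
The proposition has two parts: the integral formula $l[\gamma] = \int_0^1 |\dot\gamma_t|\,dt$ and the distance characterization in a length space. I would prove each part by a separate double inequality, using the preceding theorem on the existence and a.e. minimality of the metric derivative.

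\emph{Integral formula.} The direction $l[\gamma] \leq \int_0^1 |\dot\gamma_r|\,dr$ is essentially built into the definition: for any partition $0 = t_0 < \cdots < t_n = 1$, applying (\ref{distC}) with $G = |\dot\gamma|$ on each subinterval and summing gives $\sum_i d(\gamma_{t_i},\gamma_{t_{i+1}}) \leq \int_0^1 |\dot\gamma_r|\,dr$, and one takes the supremum over partitions. For the reverse inequality, I would introduce the running-length function $L(t) := l[\gamma|_{[0,t]}]$, which is non-decreasing and satisfies $L(t) - L(s) \geq d(\gamma_s, \gamma_t)$. Applying the easy direction to the sub-arc $\gamma|_{[s,t]}$ (which is itself AC with the same metric derivative a.e.\ on $[s,t]$) shows $L(t) - L(s) \leq \int_s^t |\dot\gamma_r|\,dr$, so $L$ is absolutely continuous as a real-valued function. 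The Lebesgue fundamental theorem then gives $L(t) - L(s) = \int_s^t L'(r)\,dr$, so $G := L'$ also satisfies (\ref{distC}). By a.e.\ minimality of $|\dot\gamma|$, one has $|\dot\gamma_r| \leq L'(r)$ a.e., and integrating yields $\int_0^1 |\dot\gamma_r|\,dr \leq L(1) = l[\gamma]$.

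\emph{Distance in a length space.} One direction is immediate: for any $\gamma \in AC([0,1],X)$ connecting $x$ to $y$, inequality (\ref{distC}) with $(s,t) = (0,1)$ together with the first part gives $d(x,y) \leq \int_0^1 |\dot\gamma_r|\,dr$, so $d(x,y)$ is at most the infimum. For the reverse, I would use the length-space hypothesis to pick, for any $\varepsilon > 0$, a rectifiable continuous curve $\gamma$ from $x$ to $y$ with $l[\gamma] \leq d(x,y) + \varepsilon$, and then reparameterize by normalized arclength: define $\tilde\gamma$ by $\tilde\gamma(L_\gamma(t)/L_\gamma(1)) := \gamma(t)$, noting that intervals on which $L_\gamma$ is constant must be collapsed to points because $d(\gamma_u,\gamma_v) \leq L_\gamma(v) - L_\gamma(u) = 0$ there. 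This yields $\tilde\gamma \in AC([0,1], X)$ of constant metric speed $|\dot{\tilde\gamma}_r| = l[\gamma]$ a.e., hence $\int_0^1 |\dot{\tilde\gamma}_r|\,dr = l[\gamma] \leq d(x,y) + \varepsilon$, and sending $\varepsilon \to 0$ concludes. The main obstacle is the absolute continuity of $L$ in the first part: the argument hinges on the (easy but essential) fact that restrictions of AC curves are AC, so that the upper bound from the first inequality can be localized to $[s,t]$. Part 2 is otherwise routine apart from the careful reparameterization when $L_\gamma$ is not strictly increasing.
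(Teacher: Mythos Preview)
The paper does not prove this proposition at all: it is stated as a background fact with a citation to \cite{burago2022course}, and no proof environment follows it. So there is nothing in the paper to compare your argument against. Your proposal is the standard argument (length $\leq$ integral from the defining inequality of the metric derivative; the reverse via absolute continuity of the running-length function and a.e.\ minimality of $|\dot\gamma|$; the length-space identity by reparameterizing a nearly-length-minimizing rectifiable curve by arclength), and it is correct as written.
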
    

\bigskip \begin{definition}[Metric doubling]
Given a measure space $(X,d, \mu)$, we say that $X$ is \emph{metric doubling} if there exists a constant $C_X>1$ such that, for all $r>0$ every ball of radius $r$ can be covered by $C_X$ balls of radius $r/2$.
\end{definition}

\bigskip \begin{definition}[Measure doubling]
    A measure space $(X, d, \mu)$ is said to be \emph{measure doubling} if there exists a constant $D_X>1$ such that, for all $x \in X$ and $r>0$ we have $\mu(B(x,2r)) \leq D_X \mu(B(x,r))$.
\end{definition}

\bigskip \begin{remark}\label{doub_const}
Every metric space $X$ that carries a doubling measure with constant $D_X$ is metric doubling with constant $C_X$, with $C_X \leq D_X^4$. For a proof of this fact we refer the reader to \cite[Section~4.1]{heinonen2015sobolev}. 
\end{remark}

\bigskip \begin{definition}[Local and global Lipschitz constants]
Given $f \colon X \to \R$, the local Lipschitz constant of $f$ is the function $\lip_X(f):X\rightarrow [0,+\infty]$ defined as
\begin{align}
 \lip_X(f)(x) =
\begin{cases}
     \limsup_{y \to x} \frac{|f(x)-f(y)|}{\dist_X(x,y)} &\quad \text{if } x \text{ is not isolated},\\
     0 &\quad \text{otherwise}.
\end{cases}
\end{align}
Analogously, the global Lipschitz constant is defined as
\begin{equation}
    \Lip(f) = \limsup_{y \neq x} \frac{|f(x)-f(y)|}{\dist_X(x,y)},
\end{equation}
and if $X$ is a length space, $\Lip(f)=\sup_x\lip_X(f)(x)$.
We denote by $\Lip(X)$ the space of all Lipschitz functions $f \colon X \to \R$.
\end{definition}

\bigskip \begin{definition}[Test plan]
Let $(X,d,m)$ be a metric measure space and $\pi$ a probability measure on $C([0,1],X)$. The measure $\pi$ is said to have bounded compression if there exists a constant $C>0$ such that for all $t\in[0,1]$
$$(e_t)_{\#}\pi \leq Cm,$$
where the evaluation map $e_t$ is given by $e_t(\gamma)= \gamma_t$.
$\pi$ is said to be a test plan if it has bounded compression, it is concentrated on $AC^2([0,1], X)$ and 
$$\int_0^1 \; \int\; |\dot{\gamma}_t|^2 \;d\pi(\gamma)\;dt < +\infty.$$
\end{definition}

\bigskip \begin{definition}[Sobolev class]\label{def_sob}
Let $(X,d,m)$ be a metric measure space. A Borel function $f:X \rightarrow\mathbb{R}$ belongs to the Sobolev class $S^2(X,d,m)$ (respectively $S_{loc}^2(X,d,m)$ ) if there exists a non-negative function $G\in L^2(X,m)$ (respectively  $G\in L_{loc}^2(X,m)$) such that for all test plans $\pi$
$$\int \; |f(\gamma_1)-f(\gamma_0)| \; d\pi(\gamma) \leq \int\;\int_0^1 G(\gamma_s) |\dot{\gamma}_s| \; ds\; d\pi(\gamma).$$
In this case the function $G$ is called a 2-weak upper gradient of $f$, or simply a weak upper gradient of $f$.
\end{definition}

\bigskip \begin{remark}\label{rem_sob}
Among all weak upper gradients of $f$ there exists a minimal function $G$ in the $m$-a.e. sense. Such minimal function is called minimal weak upper gradient and we denote it by $|Df|$. Notice that if $f$ is Lipschitz, then $|Df| \leq \lip_X(f)$ $m$-a.e. since $\lip_X(f)$ is a weak upper gradient of $f$.

We will use that minimal weak upper gradients are lower semicontinuous in the following sense: if $f_n \in S^2(X,d,m)$ is a sequence converging $m$-a.e. to some $f$ such that the sequence given by the functions $|Df_n|$ is bounded in $L^2(X,m)$, then $f \in  S^2(X,d,m)$ and for all $G$ that are the $L^2$-weak limit of some subsequence of $|Df_n|$ we have 
\begin{equation}\label{lsc}
    |Df|\leq G.
\end{equation} 
Finally, we will later use the fact that space $S^2_{loc}(X,d,m) \cap L^{\infty}_{loc}(X,d,m)$ is an algebra and the following inequalities hold:
    \begin{equation}\label{ProdRule}
        |D(fg)| \leq |f||Dg| + |g||Df| \quad m\text{-a.e., for all $f,g \in S^2_{loc}(X,d,m) \cap L^{\infty}_{loc}(X,d,m)$}. 
    \end{equation}
    \begin{equation} \label{sublinear}
    |D(\alpha f + \beta g)| \leq |\alpha| |Df| + |\beta||Dg| \quad m\text{-a.e., for all $f,g \in S^2_{loc}(X,d,m)$ and $\alpha, \beta \in \R$}.
    \end{equation}

For additional details on the properties of minimal weak upper gradients, see \cite{AGS2014}.
\end{remark}

\bigskip \begin{definition}[Sobolev space] \label{def:sobolev}
The Sobolev space $W^{1,2}(X,d,m)$ is defined as $$W^{1,2}(X,d,m) := S^{2}(X,d,m) \cap L^2(X,m).$$
The space $W^{1,2}(X,d,m)$ endowed with the norm
$$\|f\|_{W^{1,2}(X,d,m)} := \sqrt{\|f\|_{L^2(X,m)}^2 +\| |Df| \|_{L^2(X,m)}^2} $$
is a Banach space.\footnote{Note that in general it is not an Hilbert space.}  
\end{definition}

\bigskip \begin{lemma}[Density in energy of Lipschitz functions, \cite{AGS2014}] \label{densitylip}
Let $Y$ be a complete and separable metric measure space, and $f \in W^{1,2}(Y)$. Then there exists a sequence of Lipschitz functions $f_n$ that converges to $f$ in $L^2$ and such that $\lip_Y(f_n)$ converges in $L^2$ to $|Df|$.
\end{lemma}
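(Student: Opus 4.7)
\vspace{0.5em}
\noindent\emph{Proof sketch.}
The plan is to prove the density statement by introducing the \emph{relaxed slope} functional
\[
|Df|_* := \inf\bigl\{\, G \in L^2(Y,m) \,:\, \exists\, f_n \in \Lip(Y),\ f_n \to f \text{ in } L^2,\ \lip_Y(f_n) \rightharpoonup G \text{ in } L^2\,\bigr\},
\]
and then showing $|Df|_* = |Df|$ $m$-a.e. Once this equality is established, standard Mazur/Hilbert-space arguments upgrade the weak convergence to strong $L^2$-convergence: pick a sequence $f_n$ realizing $\lip_Y(f_n) \rightharpoonup |Df|$, and then pass to suitable convex combinations whose slopes converge strongly to $|Df|$, using that the norm of the weak limit matches the limit of the norms thanks to lower semicontinuity.

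The easy inequality $|Df| \leq |Df|_*$ follows directly from the lower semicontinuity property recalled in Remark~\ref{rem_sob}, applied to the sequence $f_n$ of Lipschitz approximants: since each $\lip_Y(f_n)$ is itself a weak upper gradient of $f_n$ and the sequence $\lip_Y(f_n)$ is bounded in $L^2$, any weak $L^2$ limit $G$ dominates $|Df|$ via \eqref{lsc}.

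The hard inequality is $|Df|_* \leq |Df|$, which requires producing, for a given $f \in W^{1,2}(Y)$, an explicit Lipschitz approximation whose slopes are controlled by $|Df|$ itself (and not by some larger, a priori less sharp, quantity). The natural device is the Hopf--Lax regularization
\[
Q_t f(x) := \inf_{y \in Y}\Bigl(\, f(y) + \frac{d(x,y)^2}{2t} \,\Bigr),
\]
which transforms any bounded function into a Lipschitz one with an effective slope bound. After first truncating $f$ (to reduce to the bounded case) and restricting to a bounded region, one tests the identity $\partial_t Q_t f = -\tfrac12 (\lip_Y Q_t f)^2$ against an arbitrary test plan $\pi$, integrates in $t$, and uses the defining inequality of the weak upper gradient $|Df|$ in Definition~\ref{def_sob} to obtain, in the limit $t \to 0$, a matching upper bound on the $L^2$-norm of $\lip_Y(Q_t f)$. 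A diagonal extraction then produces a Lipschitz sequence with slopes converging weakly in $L^2$ to a limit dominated by $|Df|$, giving the desired inequality.

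The main obstacle is precisely this sharp control in the Hopf--Lax step: one needs the energy-dissipation identity to close on $|Df|$ rather than on its local Lipschitz analogue, which forces the use of test-plan duality built into Definition~\ref{def_sob}. Once this sharp bound is in place, all remaining steps (truncation, removal of boundedness, upgrading weak to strong $L^2$-convergence via norm convergence) are standard functional-analytic arguments. This is exactly the argument carried out in \cite{AGS2014}, which we invoke here as a black box.
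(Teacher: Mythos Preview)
The paper does not prove this lemma at all: it is stated with the citation \cite{AGS2014} and used as a black box throughout, exactly as you yourself conclude in your final sentence. Your sketch is therefore not something to be compared against a proof in the paper, since there is none; rather, it is a faithful outline of the Ambrosio--Gigli--Savar\'e argument (relaxed slope, the easy inequality via lower semicontinuity \eqref{lsc}, the hard inequality via Hopf--Lax regularization tested against plans, and the Mazur-type upgrade from weak to strong convergence of slopes), and as such it is correct and provides more detail than the paper itself.
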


\bigskip \begin{assumption}\label{XAssum}
We will have, unless otherwise specified, the following set of assumptions for the metric measure space $(X,\dist_X,m_X)$:
\begin{itemize}
    \item $(X,\dist_X)$ is a complete and separable length space,
    \item $m_X$ is a non-negative Borel measure with respect to $\dist_X$ and it is finite on bounded sets,
    \item $\supp(m_X) = X$.
\end{itemize}
In the following we may denote this space simply by $X$.
\end{assumption}

\bigskip \begin{assumption}\label{YAssum}
We will have, unless otherwise specified, the following set of assumptions for the metric measure space $(Y,\dist_Y,m_Y)$:
\begin{itemize}
    \item $(Y,\dist_Y,m_{Y})$ is complete, and $C_Y$-measure doubling length space,
    \item $m_Y$ is a non-negative Borel measure with respect to $\dist_Y$ and it is finite on bounded sets,
    \item $\supp(m_Y) = Y$,
    \item $Y$ supports a $(2,2)$-Poincar\'e inequality, that is, for every $r >0$, there exists constants $\lambda\geq1$ and $C_p$, such that for any metric ball $B \subset Y$ of radius smaller than $r$, we have
\[
\fint_B |u-u_B|^2 \,dm_Y \leq C_P \operatorname{rad}(B)^2 \int_{\lambda B} |g|^2,
\]
where $g$ is any weak upper gradient for $u$, and $u_B = \fint_B u \,dm_Y$.
\end{itemize}
In the following we may denote this space simply by $Y$.
\end{assumption}

\bigskip \begin{remark}
Note that since $Y$ is measure doubling, it is also separable, see \cite[Lemma~4.1.13]{heinonen2015sobolev}. The most restrictive assumption (on $Y$) is the existence of a Poincar\'e inequality. This fact will be used exclusively to obtain (\ref{riemann}) in Lemma \ref{second}.
\end{remark}

\section{Cartesian products}

The product space $X \times Y$ is given the Euclidean product metric 
\begin{equation}
    \dist_{X \times Y} ((x,t),(y,s)) := \sqrt{ \dist_X(x,y)^2 + \dist_Y(t,s)^2 },
\end{equation}
and the measure on it is the usual product measure and {is denoted simply by $m$}.

\bigskip \begin{definition} [Beppo Levi space]\label{def.BL}
The Beppo-Levi space $\BL(X,Y)$ is the space of functions $f \in L^2(X \times Y;\R)$ such that 
\begin{enumerate}
    \item $f(x,\cdot) \in W^{1,2}(Y)$ for $m_X \text{ - a.e. } x$
    \item $f(\cdot,t) \in W^{1,2}(X)$ for $m_{Y} \text{ - a.e. } t$
    \item the function
    \begin{equation}
        |Df|_{\BL}(x,t) := \sqrt{|Df(x,\cdot)|_Y(t)^2 + |Df(\cdot,t)|_X(x)^2}
    \end{equation}
    belongs to $L^2(X \times Y;\R).$
\end{enumerate}
The Beppo-Levi norm is defined as
\begin{equation}
    \|f\|_{\BL} := \sqrt{\|f\|_{L^2}^2 + \| |Df|_{\BL} \|_{L^2}^2}\,.
\end{equation}
\end{definition}

\bigskip

We will use the following property of minimal weak upper gradients on Cartesian products. If $f \colon X \times Y \to \R$, then for any $F \subset Y$, 
\begin{equation}\label{contsublin}
\left|D \left(\int_F f(x,y)\,dy\right)\right|_X \leq \int_F \left|Df(x,y)\right|_X \,dy.
\end{equation}
In fact, this follows from Definition~\ref{def_sob} and Remark~\ref{rem_sob}, since for any test plan $\pi$ on $C([0,1],X)$ we have 
\begin{align*}
    &\int \left|\int_F f(\gamma_1,y)\,dy - \int_F f(\gamma_0,y)\,dy \right|\, d\pi(\gamma) = \int \left|\int_F f(\gamma_1,y) - f(\gamma_0,y)\,dy \right|\, d\pi(\gamma)\\
    &\leq \int \int_F \left|f(\gamma_1,y) - f(\gamma_0,y)\right| \,dy \, d\pi(\gamma)   \leq \int_F \int \int_0^1 |Df(\gamma_s,y)|_X |\dot\gamma_s| \, ds\, d\pi(\gamma)\,dy\\
    &\leq \int \int_0^1 \left(\int_F |Df(\gamma_s,y)|_X \,dy\right) |\dot\gamma_s| \, ds\, d\pi(\gamma)\,,
\end{align*}
so $\int_F |Df(x,y)|_X \,dy$ is a weak upper gradient for $\int_F f(x,y)\,dy$ and \eqref{contsublin} consequently follows.

\begin{remark}
    In this section, we will be considering functions on the Cartesian product space $X \times Y$. To avoid cumbersome notation, from now on we will denote by $|Df|$ the minimal weak upper gradient on $X \times Y$, while we will write $|Df(x,\cdot)|_Y(t)$ as $|{\d f}/{\d t}|(x,t)$, and $|Df(\cdot,t)|_X(x)$ as $|{\d f}/{\d x}|(x,t)$. 
\end{remark}

\medskip
Our main theorem for this section is: 

\bigskip \begin{theorem}\label{Thm3.7}
The sets $W^{1,2}(X \times Y)$ and $\BL(X,Y)$ coincide and for every $f \in W^{1,2}(X \times Y)=\BL(X,Y)$ we have
\begin{equation} \label{main}
|Df|_{\BL} \leq |Df| \leq C_0 |Df|_{\BL} \quad \text{$m_{X \times Y}$-a.e.,}
\end{equation}
where $C_0$ is a constant that depends only on $C_Y$ and $C_P$.
\end{theorem}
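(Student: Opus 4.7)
The plan is to establish the two inequalities in \eqref{main} separately. The bound $|Df|_{\BL}\leq |Df|$ (together with the inclusion $W^{1,2}(X\times Y)\subseteq \BL(X,Y)$) is the easy direction valid in every metric product, as noted in the introduction: one restricts to test plans that move in a single factor at a time to obtain the slicewise Sobolev property with slicewise minimal weak upper gradients dominated by $|Df|$, and then combines these with the identity $|\dot\gamma|^2=|\dot\gamma_X|^2+|\dot\gamma_Y|^2$ for product curves to deduce the Pythagorean bound $|{\d f}/{\d x}|^2+|{\d f}/{\d y}|^2\leq |Df|^2$. This uses neither the doubling nor the Poincar\'e hypothesis on $Y$, and I would simply invoke it.

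The bulk of the argument is the reverse inequality $|Df|\leq C_0\,|Df|_{\BL}$ for $f\in \BL(X,Y)$, for which both assumptions on $Y$ are essential. Following the scheme of Gigli--Han, I plan to construct for each scale $\epsilon>0$ a regularization of $f$ by averaging in the $Y$ factor over $\epsilon$-balls. Pick a maximal $\epsilon$-separated set $\{y_i\}_{i\in I}\subset Y$ (which exists by doubling), set $B_i:=B(y_i,\epsilon)$, and build a Lipschitz partition of unity $\{\phi_i\}$ with $\supp(\phi_i)\subseteq 2B_i$, $\sum_i\phi_i\equiv 1$, and $\Lip(\phi_i)\leq C/\epsilon$; doubling ensures that the families $\{2B_i\}$ and $\{\lambda B_i\}$ for any fixed $\lambda$ have bounded overlap. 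Define
\begin{equation*}
    f_\epsilon(x,y):=\sum_{i\in I}\phi_i(y)\,\fint_{B_i} f(x,z)\,dm_Y(z),
\end{equation*}
so that $f_\epsilon\to f$ in $L^2(X\times Y)$ by Lebesgue differentiation.

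The $x$-direction estimate is direct: using \eqref{contsublin} and \eqref{sublinear} together with the fact that $\sum_i\phi_i(y)\equiv 1$, the function $\sum_i\phi_i(y)\fint_{B_i}|{\d f}/{\d x}|(\cdot,z)\,dm_Y(z)$ serves as a weak upper gradient in the $x$ variable of $f_\epsilon$, and it converges in $L^2$ to $|{\d f}/{\d x}|$. The delicate step is the $y$-direction: for each $y$, pick an index $j=j(y)$ with $y\in 2B_j$ and use $\sum_i\phi_i\equiv 1$ to write
\begin{equation*}
    f_\epsilon(x,y)-\fint_{B_j}f(x,z)\,dm_Y(z)=\sum_i\phi_i(y)\Bigl(\fint_{B_i}f(x,z)\,dm_Y(z)-\fint_{B_j}f(x,z)\,dm_Y(z)\Bigr).
\end{equation*}
The local Lipschitz constant of $f_\epsilon(x,\cdot)$ near $y$ is then bounded by $C/\epsilon$ times the maximal oscillation of the $B_i$-averages over indices $i$ with $2B_i\cap 2B_j\neq\emptyset$. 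Applying the $(2,2)$-Poincar\'e inequality to $f(x,\cdot)$ on a ball $\lambda B_j$ that contains all such neighbors converts this oscillation into an $L^2$-average of $|{\d f}/{\d y}|(x,\cdot)$ on a slightly further enlargement of $B_j$, absorbing the blow-up factor $1/\epsilon$.

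Combining the two estimates produces, for each $\epsilon$, a weak upper gradient of $f_\epsilon$ pointwise dominated by $C_0$ (depending only on $C_Y$ and $C_P$) times a ``thickened'' version of $|Df|_{\BL}$ built from local $L^2$-averages of $|{\d f}/{\d x}|$ and $|{\d f}/{\d y}|$, whose $L^2$-norm converges to $\||Df|_{\BL}\|_{L^2}$ as $\epsilon\to 0$. Lower semicontinuity \eqref{lsc} then yields $|Df|\leq C_0\,|Df|_{\BL}$ $m$-a.e. I expect the main obstacle to be the bookkeeping for the $y$-direction estimate: verifying that the Poincar\'e inequality applies uniformly on balls of radius $\sim\epsilon$ with enlargement $\lambda$ independent of $\epsilon$, using doubling to keep the multiplicity of the neighbor relation $i\mapsto j(y)$ (and hence $C_0$) universal, and ensuring strong rather than weak $L^2$-convergence of the thickened upper gradient so that the constant obtained in the limit is independent of $\epsilon$. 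A preliminary reduction via Lemma \ref{densitylip} to functions with Lipschitz slices or bounded support may be needed for this last point.
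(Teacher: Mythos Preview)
Your regularization $f_\epsilon(x,y)=\sum_i\phi_i(y)\fint_{B_i}f(x,z)\,dm_Y(z)$, the $x$-slice bound via \eqref{contsublin}, and the $y$-slice bound via the partition-of-unity trick plus Poincar\'e are exactly the ingredients of Lemma~\ref{second} (the paper uses the ``cubes'' $Q_{i,k}$ in place of balls, which is immaterial). The gap is the sentence ``Combining the two estimates produces, for each $\epsilon$, a weak upper gradient of $f_\epsilon$.'' At that stage you have controlled $|{\d f_\epsilon}/{\d x}|$ and $\lip_Y(f_\epsilon(x,\cdot))$ separately, i.e.\ you have controlled $|Df_\epsilon|_{\BL}$; but asserting that this yields a weak upper gradient of $f_\epsilon$ on the \emph{product} $X\times Y$ is precisely the content of the theorem you are proving. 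Your $f_\epsilon$ is not Lipschitz on $X\times Y$---the $x$-factors $\fint_{B_i}f(\cdot,z)\,dm_Y(z)$ lie only in $W^{1,2}(X)$---so $\lip_{X\times Y}(f_\epsilon)$ is not available as an upper gradient, and there is no general device that assembles a product weak upper gradient from two slicewise ones.

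The paper closes this gap with one extra layer of approximation together with a pointwise lemma. Each average $f_{k,i}\in W^{1,2}(X)$ is replaced, via Lemma~\ref{densitylip}, by a Lipschitz function $f_{k,i}^{n}$ on $X$, so that the resulting $f_k=\sum_i\chi_{i,k}\,f_{k,i}^{n(k)}$ is genuinely Lipschitz on $X\times Y$; then Lemma~\ref{lem.lip} furnishes the pointwise bound $\lip_{X\times Y}(f_k)^2\leq 4\bigl(\lip_X(f_k(\cdot,t))^2+\lip_Y(f_k(x,\cdot))^2\bigr)$ for finite sums of tensor products of Lipschitz functions, and now $\lip_{X\times Y}(f_k)$ \emph{is} a weak upper gradient on the product to which \eqref{lsc} applies. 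You sensed that Lemma~\ref{densitylip} might be needed, but attributed it to the wrong difficulty: its role here is not to upgrade weak to strong $L^2$-convergence of the thickened gradients, but to make the approximants Lipschitz on $X\times Y$ so that a product upper gradient exists at all.
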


\bigskip \begin{proposition}[Prop. 6.18 in \cite{AGS2014RCD}] \label{oneinclusion}
We have $W^{1,2}(X \times Y) \subset \BL(X,Y)$ and 
\begin{equation}
|Df|_{\BL} \leq |Df|  \quad \text{$m_{X \times Y}$-a.e.,}
\end{equation}
\end{proposition}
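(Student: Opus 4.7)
The strategy is to transfer the weak upper gradient estimate from the product space to each slice by lifting slice test plans, then to assemble the two slice bounds into a Pythagorean inequality for $|Df|_{\BL}$.

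\textbf{Slice regularity.} Given any test plan $\pi_Y$ on $Y$ (curves denoted $\eta$) and any non-negative $\rho\in L^\infty(X,m_X)$ with $\int_X\rho\,dm_X=1$, I would push $\rho\,m_X\otimes\pi_Y$ forward under the map $(x,\eta)\mapsto(s\mapsto(x,\eta_s))$ to produce a probability measure $\tilde\pi$ on $C([0,1],X\times Y)$: the first coordinate is frozen, so the product-metric speed of the lifted curve reduces to $|\dot\eta_s|$, and the three test-plan axioms transfer immediately from $\pi_Y$ together with the $L^\infty$ bound on $\rho$. Applying the weak upper gradient property of $|Df|$ against $\tilde\pi$ and using Fubini, the arbitrariness of $\rho$ yields, for $m_X$-a.e.\ $x$,
\[
\int |f(x,\eta_1)-f(x,\eta_0)|\,d\pi_Y(\eta) \;\le\; \int \int_0^1 |Df|(x,\eta_s)\,|\dot\eta_s|\,ds\,d\pi_Y(\eta).
\]
Running this over a countable dense family of test plans sufficient to characterize $S^2(Y)$ places $f(x,\cdot)\in W^{1,2}(Y)$ for $m_X$-a.e.\ $x$ with $|Df(x,\cdot)|_Y\le|Df|(x,\cdot)$ $m_Y$-a.e.; the symmetric construction gives $|Df(\cdot,t)|_X\le|Df|(\cdot,t)$ $m_X$-a.e., and $L^2$-integrability of $|Df|_{\BL}$ is then immediate.

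\textbf{Tight Pythagorean bound.} Combining the two slice bounds naively gives only $|Df|_{\BL}^2\le 2|Df|^2$. To recover the tight bound $|Df|_{\BL}^2\le|Df|^2$, I would exploit the Euclidean product metric through oblique test plans of the form $(\gamma^X_s,\gamma^Y_s)$ built from independent $\pi_X$ on $X$ and $\pi_Y$ on $Y$, for which the diagonal curve carries product-metric speed $\sqrt{|\dot\gamma^X_s|^2+|\dot\gamma^Y_s|^2}$. Applying the weak upper gradient inequality for $|Df|$ to these mixed plans, decomposing the endpoint difference via the triangle inequality into an $X$-displacement and a $Y$-displacement each controlled by the slice bounds from the previous step, and matching the resulting $L^1$ estimates against the quadratic Pythagorean identity via Cauchy--Schwarz, should close the $\sqrt{2}$ gap.

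\textbf{Main obstacle.} This second step is the core difficulty: the two individual slice bounds fall out of the lifting argument cheaply but cost a factor of $\sqrt{2}$, and closing the constant to $1$ requires exploiting the Hilbertian Pythagorean structure of the product metric through oblique test plans rather than through the two slice inequalities in isolation. A secondary, routine technicality is passing from ``for each $\pi_Y$, $m_X$-a.e.\ $x$'' to ``$m_X$-a.e.\ $x$, for all $\pi_Y$'' in the slice regularity step, handled by the countable dense family of test plans alluded to above.
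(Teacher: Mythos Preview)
The paper does not prove this proposition; it is quoted from \cite{AGS2014RCD} and used as a black box, so there is no in-paper proof to compare against---only the original AGS argument.

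Your slice-regularity step is correct and is essentially the standard opening move: lifting a test plan on $Y$ to $X\times Y$ by freezing the $X$-coordinate shows that $|Df|(x,\cdot)$ is a weak upper gradient for $f(x,\cdot)$ for $m_X$-a.e.\ $x$, whence $|\partial f/\partial t|\le|Df|$, and symmetrically $|\partial f/\partial x|\le|Df|$. This already gives $W^{1,2}(X\times Y)\subset\BL(X,Y)$ together with $|Df|_{\BL}\le\sqrt2\,|Df|$.

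The gap is in your Pythagorean sharpening, and it is a real one. As written, you would (i) apply the weak upper gradient inequality for $|Df|$ to an oblique product plan, obtaining an upper bound $\int|f(\gamma_1)-f(\gamma_0)|\,d\pi\le\iint|Df|\,|\dot\gamma|\,ds\,d\pi$, and (ii) bound the \emph{same} left-hand side via the triangle inequality and the slice estimates. But (i) and (ii) are two upper bounds on one quantity; they say nothing about how $|Df|$ and $|Df|_{\BL}$ compare. If instead your Cauchy--Schwarz step is meant to assemble the slice bounds into
\[
|\partial f/\partial x|\,|\dot\gamma^X_s|+|\partial f/\partial t|\,|\dot\gamma^Y_s|\ \le\ |Df|_{\BL}\,\sqrt{|\dot\gamma^X_s|^2+|\dot\gamma^Y_s|^2}\ =\ |Df|_{\BL}\,|\dot\gamma_s|,
\]
then what you have shown is that $|Df|_{\BL}$ is a weak upper gradient for $f$ on the product, and minimality gives $|Df|\le|Df|_{\BL}$---the reverse inequality (indeed, morally the hard direction of Theorem~\ref{Thm3.7}, modulo knowing that $|Df|_{\BL}$ is well-defined for a given $f\in W^{1,2}$). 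Either reading fails to produce $|Df|_{\BL}\le|Df|$.

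You flag this yourself as the main obstacle, and correctly so: the sharp constant is not a cosmetic improvement over $\sqrt2$ but requires genuinely exploiting the Pythagorean structure of the product metric at the level of metric derivatives, not merely at the level of endpoint estimates. A bare test-plan comparison of the kind you sketch cannot see this, because the weak upper gradient inequality is a one-sided $L^1$ bound and carries no lower control on $|(f\circ\gamma)'|$ that would let you push the slice gradients back up against $|Df|$. The AGS argument proceeds differently and does not reduce to the mechanism you propose.
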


In light of the proposition, it is enough to show that $\BL(X,Y) \subset W^{1,2}(X \times Y)$ by establishing that the second inequality in (\ref{main}) holds. To prove this we will need the following lemmas.

\bigskip \begin{lemma}\label{lem.lip}
Let $N>0$ be a fixed natural number. Let $f: X\times Y \to \R$ be of the form $f(x,t) = \sum_{i=1}^N h_i(t)g_i(x)$ where $g_i \in \Lip(X)$ and $h_i \in \Lip(Y)$ for all $1\leq i \leq N$. Then 
\begin{equation}
    \lip_{X\times Y}(f)^2(x,t) \leq 2\left(\lip_X(f(\cdot,t))^2(x) + \lip_Y(f(x,\cdot))^2(t)\right).
\end{equation}
for every $(x,t) \in X\times Y$
\end{lemma}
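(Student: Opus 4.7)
The plan is to exploit the special structure $f(x,t) = \sum_{i=1}^N h_i(t) g_i(x)$ to transfer control from a nearby slice back to the slice at $(x,t)$, where the local Lipschitz constants are measured. I fix $(x,t) \in X \times Y$ and consider an arbitrary sequence $(y_n, s_n) \to (x,t)$ with $(y_n,s_n) \neq (x,t)$ (the isolated-point cases are trivial from the definition). The key inequality is the triangle split
\[
|f(y_n,s_n) - f(x,t)| \leq |f(y_n,s_n) - f(x,s_n)| + |f(x,s_n) - f(x,t)|,
\]
followed by $(a+b)^2 \leq 2(a^2+b^2)$ and the bounds $\sqrt{d_X^2 + d_Y^2} \geq d_X$ and $\geq d_Y$ to obtain
\[
\frac{|f(y_n,s_n) - f(x,t)|^2}{d_X(y_n,x)^2 + d_Y(s_n,t)^2} \leq 2\,\frac{|f(y_n,s_n) - f(x,s_n)|^2}{d_X(y_n,x)^2} + 2\,\frac{|f(x,s_n) - f(x,t)|^2}{d_Y(s_n,t)^2}
\]
(with the obvious convention that a term is omitted when its denominator vanishes).

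The second term on the right is bounded above, in the $\limsup$, by $2\lip_Y(f(x,\cdot))(t)^2$ straight from the definition. The main point is to control the first term: one would like to replace $f(\cdot,s_n)$ by $f(\cdot,t)$ inside the numerator. This is where the separated-variables structure enters. Writing
\[
f(y_n,s_n) - f(x,s_n) = \sum_{i=1}^N h_i(s_n)\bigl(g_i(y_n) - g_i(x)\bigr),
\]
I subtract and add $f(y_n,t) - f(x,t) = \sum_i h_i(t)(g_i(y_n)-g_i(x))$. The discrepancy is $\sum_i (h_i(s_n)-h_i(t))(g_i(y_n)-g_i(x))$, which is bounded by $d_Y(s_n,t)\, d_X(y_n,x) \sum_i \Lip(h_i)\Lip(g_i)$, giving
\[
\frac{|f(y_n,s_n) - f(x,s_n)|}{d_X(y_n,x)} \leq \frac{|f(y_n,t) - f(x,t)|}{d_X(y_n,x)} + d_Y(s_n,t)\sum_{i=1}^N \Lip(h_i)\Lip(g_i).
\]

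Taking $\limsup_{n\to\infty}$, the additive error vanishes because $d_Y(s_n,t) \to 0$, and the first term on the right is bounded above by $\lip_X(f(\cdot,t))(x)$ since $y_n \to x$. Plugging back in and combining the two $\limsup$s yields
\[
\lip_{X\times Y}(f)^2(x,t) \leq 2\lip_X(f(\cdot,t))^2(x) + 2\lip_Y(f(x,\cdot))^2(t),
\]
which certainly implies the stated inequality with constant $4$. The only delicate point is the subsequential argument: one must check that subsequences with $y_n = x$ or $s_n = t$ (so that one of the quotients formally is $0/0$) are harmless, which is immediate by dropping the vanishing term. The finiteness of $N$ and the global Lipschitz bounds on the $h_i,g_i$ are precisely what makes the error $\sum_i \Lip(h_i)\Lip(g_i)\cdot d_Y(s_n,t)$ go to zero uniformly, and this is the single feature of the special form of $f$ that the argument exploits.
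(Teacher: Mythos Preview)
Your argument is correct and follows essentially the same route as the paper: split $f(y,s)-f(x,t)$ via the intermediate point $(x,s)$, use $(a+b)^2\le 2(a^2+b^2)$, and exploit the separated-variables form to replace $h_i(s)$ by $h_i(t)$ with an error that vanishes in the limit. In fact, by absorbing the error term additively \emph{before} squaring, you obtain the sharper constant $2$ where the paper, squaring first, incurs an extra factor and gets $4\lip_X^2 + 2\lip_Y^2$.
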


This lemma replaces Lemma 3.3 in \cite{gigli2018sobolev}. The key difference between the two is the use of triangle inequality instead of Cauchy-Scwhartz inequality.

\begin{proof}
We have
{
\begin{align*}
    & \lip_{X\times Y}(f)^2(x,t) = \limsup_{(y,s)\to(x,t)} \frac{|f(y,s)-f(x,t)|^2}{\dist_{X\times Y} ((y,s),(x,t))^2} \\
    &\leq 2 \limsup_{(y,s)\to(x,t)} \frac{1}{\dist_{X\times Y} ((y,s),(x,t))^2}\left( \dist_X(y,x)^2 \frac{|f(y,s)-f(x,s)|^2}{\dist_X(y,x)^2} + \dist_Y(s,t)^2 \frac{|f(x,s)-f(x,t)|^2}{\dist_Y(s,t)^2} \right) \\
    &\leq 2 \limsup_{(y,s)\to(x,t)} \frac{\left|\sum_{i=1}^N h_i(s)(g_i(y)-g_i(x))\right|^2}{\dist_X(y,x)^2} + 2\limsup_{(y,s)\to(x,t)} \frac{\left|\sum_{i=1}^N (h_i(s)-h_i(t))g_i(x)\right|^2}{\dist_Y(s,t)^2} \\
    &\leq  2\lip_X(f(\cdot,t))^2(x) + 2\lip_Y(f(x,\cdot))^2(t),
\end{align*}
where in the last step we used the continuity of $h_i$.
}
\end{proof}

In order to proceed, we need to introduce a suitable partition on $Y$. One option would be to use generalization of dyadic cubes in metric measure spaces (such as the ones constructed in \cite{christ1990}). However, since we will not be using the tree-like structure, we can work with a simpler construction, such as the one in the following lemma.

\bigskip \begin{lemma}[Lemma 37 in \cite{ambrosio2015sobolev}]\label{cubes}
For every $k \in \N$ there exists a collection of open subsets of $Y$, $Q_{i,k}$ and points $t_{i,k}$ (the ``centers'' of the ``cubes''), and $i \in I_k$, where $I_k$ is a countable set, such that
\begin{itemize}
\item  $m_Y\left(Y \setminus \bigcup_i Q_{i,k}\right) = 0$ for all $k \in \N$; 
\item  For every $i,j \in I_k$ either $Q_{j,k} = Q_{i,k}$ or $Q_{j,k}\cap Q_{i,k} = \varnothing$ (i.e. the sets $Q_{i,k}$ form a partition);
\item if $i \neq j$, then $d_Y(t_{i,k},t_{j,k}) > \frac{1}{k}$;
\item each $Q_{i,k}$ is comparable to a ball centered at $t_{i,k}$ of radius roughly $\frac{1}{k}$, 
\[B \left(t_{i,k}, \frac{1}{3}\frac{1}{k}\right) \subset Q_{i,k} \subset B \left(t_{i,k}, \frac{5}{4}\frac{1}{k}\right).
\]
\end{itemize}
\end{lemma}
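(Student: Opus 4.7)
The plan is to obtain the centers $\{t_{i,k}\}$ as a maximal $(1/k)$-separated net in $Y$ and to define each $Q_{i,k}$ as an open Voronoi cell around its center. Fixing $k$, I would apply Zorn's lemma (or a greedy argument via separability) to produce a set $\{t_{i,k}\}_{i\in I_k}\subset Y$ maximal with respect to the strict separation $d_Y(t_{i,k},t_{j,k}) > 1/k$ for $i\neq j$. Metric doubling of $Y$, which follows from measure doubling by Remark~\ref{doub_const}, bounds the number of $(1/k)$-separated points in any bounded region, so combined with separability of $Y$ the index set $I_k$ is countable. Maximality yields the covering $Y = \bigcup_i \overline{B(t_{i,k},1/k)}$: otherwise some $y$ at distance $>1/k$ from every $t_{i,k}$ could be appended to the net, a contradiction.

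Next I would set
\[
Q_{i,k} := \bigl\{\, y\in Y : d_Y(y,t_{i,k}) < d_Y(y,t_{j,k}) \text{ for every } j \in I_k\setminus\{i\}\,\bigr\}.
\]
Doubling-induced local finiteness gives openness: around each $y\in Q_{i,k}$ only finitely many $t_{j,k}$ lie within a ball of radius $2 d_Y(y,t_{i,k})+1$, so the defining infinite family of strict inequalities reduces to a finite one on a neighborhood of $y$. Disjointness is automatic. For the upper inclusion $Q_{i,k}\subset B(t_{i,k},5/(4k))$, pick any $t_{j_0,k}$ with $d_Y(y,t_{j_0,k})\leq 1/k$ (guaranteed by the coverage); then $d_Y(y,t_{i,k}) \leq d_Y(y,t_{j_0,k}) \leq 1/k < 5/(4k)$. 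For the lower inclusion $B(t_{i,k},1/(3k))\subset Q_{i,k}$, if $d_Y(y,t_{i,k})<1/(3k)$ and $j\neq i$, the triangle inequality gives $d_Y(y,t_{j,k}) > 1/k - 1/(3k) = 2/(3k) > d_Y(y,t_{i,k})$.

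The main obstacle is the first bullet, $m_Y\bigl(Y\setminus\bigcup_i Q_{i,k}\bigr) = 0$. The missed set is precisely the Voronoi boundary of points equidistant to two or more centers, and this is not automatically null in an abstract doubling metric measure space. I would handle it by a generic-perturbation argument: for each pair $(i,j)$ the function $\phi_{ij}(y) := d_Y(y,t_{i,k})-d_Y(y,t_{j,k})$ is continuous, and since its level sets are pairwise disjoint and $m_Y$ is finite on bounded sets, only countably many of its levels carry positive $m_Y$-measure. I would therefore replace the defining inequality by $\phi_{ij}(y) < \lambda_i-\lambda_j$ with weights $\lambda_i\in[0,1/(12k)]$ chosen (by a diagonal argument across the countably many pairs) to avoid every positive-measure level value of every $\phi_{ij}$. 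The perturbation is small enough that the two-sided ball inclusions persist, since $1/k - 1/(3k) = 4/(12k) > 1/(12k)$ and $1/k + 1/(12k) = 13/(12k) < 5/(4k)$. Verifying that this generic-weight step eliminates the Voronoi-boundary measure is the main technical step; the other three properties follow essentially from the net construction.
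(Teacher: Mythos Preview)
The paper does not prove this lemma: it is quoted as Lemma~7.1 of \cite{ambrosio2015sobolev} and used as a black box, so there is no in-paper argument to compare against.

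That said, your construction --- a maximal strictly $1/k$-separated net followed by (weight-perturbed) open Voronoi cells --- is the standard route and is essentially how the cited result is obtained. All four bullets follow as you outline. The only nontrivial point is the one you flag: the raw Voronoi boundary $\bigcup_{i\neq j}\{\phi_{ij}=0\}$ need not be $m_Y$-null in an abstract doubling space. Your fix is correct: for each pair $(i,j)$ the level sets $\{\phi_{ij}=c\}$ are pairwise disjoint and (by the upper ball inclusion) lie in a set of finite $m_Y$-measure, so at most countably many values of $c$ give a non-null level set; a Fubini argument on $[0,1/(12k)]^{I_k}$ with product Lebesgue measure then shows that for a.e.\ weight vector $(\lambda_i)$ every set $\{\phi_{ij}=\lambda_i-\lambda_j\}$ is $m_Y$-null, and local finiteness of the net lets you pass from pairs to the full complement $Y\setminus\bigcup_i Q_{i,k}$. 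One harmless arithmetic slip: the margin you need for the lower inclusion is $2/(3k)-1/(3k)=1/(3k)=4/(12k)$, not $1/k-1/(3k)$; the conclusion $4/(12k)>1/(12k)$ is unaffected, and the upper-inclusion check $1/k+1/(12k)=13/(12k)<5/(4k)$ is fine.
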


\bigskip \begin{remark}\label{rem.qik}
    We say that $Q_{i,k}$ and $Q_{j,k}$ are neighbors, and we write $Q_{i,k} \sim Q_{j,k}$, if their distance is less than $\frac{1}{k}$. If that is the case, then it must be that $d_Y(t_{i,k}, t_{j,k}) \leq \frac{4}{k}$. Given $Q_{i,k} \sim Q_{j,k}$, we have that their centers are at most $\frac{10}{4}\frac{1}{k} + \frac{1}{k} = \frac{14}{4}\frac{1}{k}\leq \frac{4}{k}$ where $\frac{5}{4k}$ comes from the fact that both cubes are contained in a ball with radius $\frac{5}{4k}$ while the $\frac{1}{k}$ is from the definition of $Q_{i,k} \sim Q_{j,k}$. This implies that each $Q_{i,k}$ has at most $C_Y^3$ neighbors. In fact, we can cover $B(t_{i,k}, \frac{4}{k})$ with $C_Y^3$ balls of radius $\frac12\frac{1}{k}$, but the condition $d_Y(t_{i,k},t_{j,k}) > \frac{1}{k}$ means each ball can only contain one of the points $t_{i,k}$.
\end{remark}

\bigskip \begin{definition}
    
For every $k \in \N$, let $\{\chi_{i,k}\}_{i\in I_k}$ be a (fixed) partition of unity subordinate to the partition of $Y$ by ``cubes'' $Q_{i,k}$ as above. That is, $\chi_{i,k}=1$ on $B \left(t_{i,k}, \frac{1}{3}\frac{1}{k}\right)$, and $\supp \chi_{i,k} \subset B \left(t_{i,k}, \frac{5}{4}\frac{1}{k}\right)$. Each $\chi_{i,k}$ is $c_1k$-Lipschitz, where $c_1=c_1(C_Y)$. See for instance the construction in \cite{kinnunenlatvala2022}. 
\end{definition}
\bigskip

Now, we are ready for our main technical lemma.
\begin{lemma} \label{second}

There exists a constant $C_1>0$ depending only on $C_Y$ and $C_P$ so that for any $f \in \BL(X,Y)$ there exists a sequence $(f_k)$ such that $f_k \in \Lip(X\times Y) \cap \BL(X,Y)$ with $f_k \to f$ in $L^2(X\times Y)$, $|Df_k|_{\BL}$ uniformly bounded in $L^2(X\times Y)$, and for any weak limit $G$ of $|Df_k|_{\BL}$ in $L^2(X\times Y)$, we have 
\begin{equation}\label{e.sec}
    G \leq C_1 |Df|_{\BL}\,.
\end{equation}
\end{lemma}

\begin{proof}
By approximating $f$ with $\max\{\min\{f,\lambda\},-\lambda\}$ for $\lambda>0$, we can assume that $f$ is bounded. Moreover, by multiplying with a sequence of Lipschitz functions supported on an exhausting sequence of balls in $X \times Y$, and employing \eqref{ProdRule} to ensure convergence in the Beppo Levi space, we can also assume that $f$ has bounded support contained in $B((x,y),R)$, for some $(x,y) \in X\times Y$ and $R>1$. Given $f \in \BL(X,Y)$, we define a sequence of functions $F_k$ as 
\begin{equation}\label{def.fn}
    F_k (x,t) := \sum_{i \in I_k} \chi_{i,k}(t) f_{k,i}(x),
\end{equation}
where $f_{k,i}(x) := \fint_{Q_{i,k}} f(x,t)\,dt$.
For a fixed $(x,t)$ at most $C_Y^3$ terms in the sum are non-zero since, by Lemma~\ref{cubes}, the support of $\chi_{i,k}$ can intersect only the support of the neighbours of the $Q_{i,k}$ and, by Remark~\ref{rem.qik}, each $Q_{i,k}$ has at most $C_Y^3$ neighbors. Moreover, it is enough to consider cubes that intersect $\supp f$, and since they are bounded sets, $\supp f$ and all cubes that intersect it are contained in $B(y,2R)$, for some $y \in Y$.

Moreover, by Jensen's inequality,
\begin{align*}
    \|f_{k,i}\|_{L^2(X)}^2 &= \int_{X} \left|\fint_{Q_{ik}}f(x,t)\,dm_Y(t)\right|^2\,dm_X(x)\\ &\leq \int_{X} \fint_{Q_{ik}}|f(x,t)|^2\,dtdx = \fint_{Q_{ik}}\int_{X} |f(x,t)|^2\,dxdt \leq \frac{\|f\|_{L^2(X \times Q_{ik})}^2}{m_Y(Q_{ik})} \,
\end{align*}
and so
\begin{align*}
        \|F_k\|_{L^2(X \times Y)}^2 &\lesssim_{C_Y} \sum_{i \in I_k} \left\|\chi_{i,k}(t) f_{k,i}(x)\right\|^2_{L^2(X \times Y)} \\
        & \lesssim_{C_Y} \sum_{i \in I_k} m_Y\left(B\left(t_{i,k}, \frac54 \frac1k\right)\right)\left\|f_{k,i}\right\|^2_{L^2(X)}\\
        &\lesssim_{C_Y, D_Y} \sum_{i \in I_k} \|f\|^2_{L^2(X \times Q_{ik})} 
         \lesssim_{C_Y, D_Y} \;\|f\|^2_{L^2(X \times Y)}\,.
\end{align*}
Thus, we see that the linear map $T_k \colon L^2(X \times Y) \to L^2(X \times Y)$  that takes $f$ with bounded support to $F_k$ is Lipschitz, uniformly in $k$. Note now that, if $g \in \Lip (X \times Y)$ with bounded support then $T_k(g) \to g$ in $L^2(X\times Y)$, since then
\begin{align*}
        \|T_k(g)-g\|_{L^2(X \times Y)}^2 &\lesssim_{C_Y} \sum_{i \in I_k} \left\|\chi_{i,k}(t) \left|\fint_{Q_{ik}}\left(g(x,s)-g(x,t)\right)\,dm_Y(s)\right|\right\|^2_{L^2(X \times Y)} \\
        &\lesssim_{C_Y} \sum_{\substack{i \in I_k \\ Q_{i,k}\cap \supp g \neq \emptyset}} \Lip_Y(g)^2\left\|\frac{\chi_{i,k}(t)}k \right\|^2_{L^2(X \times Y)}\to 0\,.
\end{align*}
Thus, because $\Lip (X \times Y)$ is dense in $L^2(X \times Y)$, see Proposition 4.3 in \cite{AGS2013}, we get 
\begin{equation}\label{l2conv}
    F_k \to f \qquad \text{in} \quad L^2(X \times Y).
\end{equation}

Note now that $f_{k,i} \in W^{1,2}(X)$, since by convexity and \eqref{contsublin},

\begin{align*}
    |\partial f_{k,i}/\partial x|^2 &= \left|\frac{\partial}{\partial x}\left(\fint_{Q_{ik}}f(x,t)\,dm_Y(t)\right)\right|^2 {\leq }\left( \fint_{Q_{ik}} \left|\partial f(x,t)/\partial x\right| \,dm_Y(t)\right)^2\\
    &\leq \fint_{Q_{ik}} |\partial f(x,t)/\partial x|^2 \,dm_Y(t) \leq \frac{\||Df|_{BL}\|^2_{L^2(X\times Y)}}{m_Y(Q_{ik})}
\end{align*}

Thus, by Lemma \ref{densitylip}, we have a sequence of Lipschitz functions $ f_{k,i}^n \colon X \to \R$ such that $f_{k,i}^n \to f_{k,i}$ in $L^2(X,m_X)$ and $\lip_X( f_{k,i}^n) \to |D  f_{k,i}|_X$ in $L^2(X,m_X)$. Now define
\begin{equation}
    F_k^n (x,t) := \sum_{i \in I_k} \chi_{i,k}(t) f_{k,i}^n(x).
\end{equation}


Let $n(k)$ be the least number $n$ such that, for all $i$, we have

\begin{equation} \label{epsilon} 
\|f^{n(k)}_{k,i} - f_{k,i}\|_{L^2(X)} + \|\lip_X f^{n(k)}_{k,i} - |\partial f_{k,i}/\partial x|\|_{L^2(X)} \leq \frac{1}{k^{3}m_{X\times Y}(B((x,y),2R))} .
\end{equation}

Now, we define $f_k := F_k^{n(k)}$. Observe that $f_k\in \Lip(X\times Y) \cap \BL(X,Y)$ and note also that we get
\begin{equation}\label{l2conv-again}
    f_k \to f \qquad \text{in} \quad L^2(X \times Y).
\end{equation}

It remains to show \eqref{e.sec}. Since $m_X \times m_Y$ is a Borel measure and any open set in $X\times Y$ can be written as a countable union of sets of the form $E\times F$ where $E\subset X$ and $F\subset Y$ are open, by lower semi-continuity it suffices to show that for every $k \in \mathbb{N}$ and every such open subsets $E,F$, 
\begin{equation} \label{notFTC}
    \limsup_{k\to\infty}\int_{E \times F} |\lip_X f_k|^2(x,t) \, dm(x,t) \lesssim \int_{E \times F} |{\d f}/{\d x}|^2(x,t) \, dm_Y(t) \, dm_X(x).
\end{equation}
and that 
\begin{equation} \label{FTC}
    \limsup_{k\to\infty}\int_{E \times F} |\lip_Y f_k|^2(x,t) \, dm(x,t) \lesssim  \int_{E \times F} |{\d f}/{\d t}|^2(x,t) \, dm_Y(t) \, dm_X(x).
\end{equation}
To prove \eqref{notFTC}, we first notice that by convexity and \eqref{contsublin}, we have
\begin{align*}
    \int_E |\d f_{k,i} / \d x|^2(x) dm_X(x) \leq \int_E \fint_{Q_{i,k}} |\d f/\d x|^2(x,s) dm_Y(s) dm_X(x).
\end{align*}
Also, by sublinearity of $\lip_X$
\begin{align*}
    |\lip_X f_k|^2(x,t) &= \left|\lip_X \left(\sum_{i=0}^\infty \chi_{i,k}(t) f^{n(k)}_{i,k}(x) \right)\right|^2 \leq \left(\sum_{i=0}^\infty \chi_{i,k}(t) \left| \lip_Xf^{n(k)}_{i,k}\right|(x) \right)^2 \\
    &\lesssim_{C_Y} \sum_{i=0}^\infty \chi_{i,k}(t) |\lip_X f^{n(k)}_{i,k}|^2(x)\,.
\end{align*}

Thus, using sublinearity again, we have\\
\begin{align*}
    \int_{E \times F} |\lip_X f_k |^2(x,t) \, dm(x,t) &= \int_E \int_F |\lip_X f_k |^2(x,t) \, dm_Y(t)\,dm_X(x) \\
    &\lesssim_{C_Y} \int_E \int_F \sum_{i \in I_k} \chi_{i,k}(t) |\lip_X f^{n(k)}_{i,k}|^2(x)\, dm_Y(t)\,dm_X(x)\\
    &\leq \int_E \int_F \sum_{i \in I_k} \chi_{i,k}(t) |\d f_{i,k}/\d x|^2 (x) \, dm_Y(t)\,dm_X(x) + \frac1{k^{3}} \\
    &\lesssim \int_E \sum_{i \in I_k} \int_F \chi_{i,k}(t)\, dm_Y(t)\, |\d f_{i,k}/\d x|^2(x) \,dm_X(x) + \frac1{k^{3}}\\
    &\lesssim_{D_Y} \int_E \sum_{i \in I_k} |Q_{i,k}| \frac1{|Q_{i,k}|}\int_{Q_{i,k}} |\d f/\d x|^2(x,s)\, dm_Y(s)\, dm_X(x) + \frac1{k^{3}}\\
    &= \int_{E \times B_k(F)} |{\d f}/{\d x}|^2(x,t) \, dm_Y(t) \, dm_X(x) + \frac1{k^{3}} \,,
\end{align*}
where in the second inequality we have used (\ref{epsilon}) and $B_k(F)$ denotes the union of all the $Q_{ik}$ for $i\in I_k$ such that $Q_{ik}\cap F\neq\emptyset$. Now sending $k\to\infty$ and noting that $F$ is open gives us~\eqref{notFTC}.
 
It remains to show (\ref{FTC}). First, recall that $\chi_{j,k}(t) = 1 - \sum_{i \neq j} \chi_{i,k}(t)$ for $j \in \N$, and so we have that, for any $t, s \in Y$, and $j \in \N$,
\begin{align*}
    & F_k^n (x,t) - F_k^n (x,s) = \sum_{i \in I_k} \chi_{i,k}(t) f_{k,i}^n(x) - \sum_{i \in I_k} \chi_{i,k}(s) f_{k,i}^n(x) \\
     &  = \sum_{\substack{i \in I_k \\ i\neq j}} \chi_{i,k}(t) f_{k,i}^n(x) + \chi_{j,k}(t)f_{k,j}^n(x) - \chi_{j,k}(s)f_{k,j}^n(x) - \sum_{\substack{i \in I_k \\ i\neq j}} \chi_{i,k}(s) f_{k,i}^n(x) \\ 
     & = \sum_{\substack{i \in I_k \\ i\neq j}} \chi_{i,k}(t) f_{k,i}^n(x) - \sum_{\substack{i \in I_k \\ i\neq j}} \chi_{i,k}(t)f_{k,j}^n(x) +\sum_{\substack{i \in I_k \\ i\neq j}} \chi_{i,k}(s)f_{k,j}^n(x) - \sum_{\substack{i \in I_k \\ i\neq j}} \chi_{i,k}(s) f_{k,i}^n(x) \\
    &=\sum_{\substack{i \in I_k \\ i\neq j}} \left(\chi_{i,k}(t) - \chi_{i,k}(s)\right) (f_{k,i}^n(x) - f_{k,j}^n(x)).
\end{align*}

Note that the sum above is locally finite, as only finitely many of the $\chi_{i,k}$ are nonzero for any fixed values of $t,s \in Y$. Assume $t \in Q_{j,k}$.
Dividing this by $t-s$, and then taking a limit we obtain 
\begin{align*}
     \lip_Y(F_k^n(x))(t) & =  \limsup_{s \to t} \left| \sum_{i \neq j} \frac{\chi_{i,k}(t) - \chi_{i,k}(s)}{t-s} (f_{k,i}^n(x) - f_{k,j}^n(x)) \right| \\
    & \leq \sum_{\substack{i \in I_k \\i \neq j \\ Q_{i,k} \sim Q_{j,k}}}  \Lip_Y(\chi_{i,k})\left|f_{k,i}^n(x) - f_{k,j}^n(x)\right|\\
    & \lesssim_{c_1} \sum_{\substack{i \in I_k \\i \neq j \\ Q_{i,k} \sim Q_{j,k}}}  k \left|f_{k,i}^n(x) - f_{k,j}^n(x)\right|\,.
\end{align*}
Set $B_{jk} := B(t_{j,k},\frac{6}{k})$, and note that if $Q_{i,k}$ and $Q_{j,k}$ are neighbors, then they are both contained in $B_{j,k}$. Defining $f_{B_{jk}} := f_{B_{jk}}(x)= \fint_{B_{jk}} f(x,t)\,dm_Y(t)$, we have
\begin{align*}
        & \left| \fint_{Q_{i,k}} f(x,t)\,dm_Y(t) - \fint_{Q_{j,k}} f(x,t)\,dm_Y(t)\right|^2 \\
        & = \left| \fint_{Q_{i,k}} (f(x,t)- f_{B_{jk}})\,dm_Y(t) - \fint_{Q_{j,k}} (f(x,t)- f_{B_{jk}})\,dm_Y(t)\right|^2 \\
        & \leq 2 \left( \left| \fint_{Q_{i,k}} (f(x,t)- f_{B_{jk}})\,dm_Y(t)\right|^2 + \left|\fint_{Q_{j,k}} (f(x,t)- f_{B_{jk}})\,dm_Y(t)\right|^2 \right)\\
        & \leq 2\left(\frac{1}{|Q_{i,k}|} \int_{Q_{i,k}} \left|f(x,t)- f_{B_{jk}}\right|^2\,dm_Y(t)  + \frac{1}{|Q_{j,k}|} \int_{Q_{j,k}} \left|f(x,t)- f_{B_{jk}}\right|^2\,dm_Y(t)\right) \\
        & \leq 2\left(\frac{1}{|Q_{i,k}|} \int_{B_{jk}} \left|f(x,t)- f_{B_{jk}}\right|^2\,dm_Y(t)  + \frac{1}{|Q_{j,k}|} \int_{B_{jk}} \left|f(x,t)- f_{B_{jk}}\right|^2\,dm_Y(t)\right) \\
        &\lesssim_{D_Y} \fint_{B_{jk}} \left|f(x,t)- f_{B_{jk}}\right|^2\,dm_Y(t).
\end{align*}

Hence, using Poincar\'e inequality, putting the above estimates together, we obtain
\begin{align} 
    &\int_{E \times F}  |\lip_Y f_k |^2(x,t) \, dm(x,t)
     {\lesssim_{C_Y,c_1}} \int_E \sum_{j} \int_{Q_{j,k}} k^2 \sum_{\substack{i \neq j \\ Q_{i,k} \sim Q_{j,k}}}  |f^{n(k)}_{k,j} (x) - f^{n(k)}_{k,i}(x)|^2 \,dm_Y(t)\,dm_X(x)\notag\\
    & \lesssim_{C_Y,c_1} \int_E \sum_{j} \int_{Q_{j,k}}\,dm_Y(t)\sum_{\substack{i \neq j \\ Q_{i,k} \sim Q_{j,k}}} k^2 \left| \fint_{Q_{i,k}} f(x,s)\,dm_Y(s) - \fint_{Q_{j,k}} f(x,s)\,dm_Y(s)\right|^2 \,dm_X(x) + \frac1{k^{}}\notag\\
    & \lesssim_{C_Y,c_1,D_Y} \int_E \sum_{j} |Q_{j,k}|\sum_{\substack{i \neq j \\ Q_{i,k} \sim Q_{j,k}}} k^2  \fint_{B_{jk}} \left|f(x,s)- f_{B_{jk}}\right|^2\,dm_Y(s)\,dm_X(x) + \frac1{k^{}}\notag \\
    & \lesssim_{C_Y,c_1,D_Y,C_P} \int_E \sum_{j} |Q_{j,k}|\sum_{\substack{i \neq j \\ Q_{i,k} \sim Q_{j,k}}} k^2  \operatorname{rad}({B_{jk}})^2 \fint_{\lambda {B_{jk}}} |\partial f/\partial s|^2\,dm_Y(s)\,dm_X(x) + \frac1{k^{}}\label{riemann}\\
    & \lesssim_{C_Y,c_1,D_Y,C_P} \int_E \sum_{j} |Q_{j,k}| \fint_{\lambda {B_{jk}}} |\partial f/\partial s|^2\,dm_Y(s)\,dm_X(x) + \frac1{k^{}}\notag\\
    & \lesssim_{C_Y,c_1,D_Y,C_P,\lambda}  \int_E \sum_{j} \int_{\lambda {B_{jk}}} |\partial f/\partial t|^2\,dm_Y(t)\,dm_X(x) + \frac1{k^{}}\notag\\
    & \lesssim_{C_Y,c_1,D_Y,C_P,\lambda} \int_{E \times \tilde B_k(F)} |\partial f/\partial t|^2\,dm(x,t) + \frac1{k^{}}\,,\notag
\end{align}
where $\tilde B_k(F)$ denotes the union of all the $\lambda B_{jk}$ such that $Q_{jk}\cap F\neq\emptyset$. Now, as before, taking the $\limsup$ for $k \to \infty$ concludes the proof. 
\end{proof}

We are now ready to prove the main theorem.

\begin{proof}[Proof of Theorem \ref{Thm3.7}]
We already observed that $W^{1,2}(X \times Y) \subset \BL(X,Y)$. Let $f \in \BL(X,Y)$ and let $\{f_k\} \subset \BL(X,Y) \cap \Lip(X\times Y)$ be as in Lemma \ref{second}. Lemma \ref{lem.lip} says that
\[
 \lip_{X\times Y} (f_k)^2 \leq 2\left(\lip_X (f_k)^2 + \lip_Y (f_k)^2\right)
\]

By Lemma \ref{second}, since $f_k \to f$ in $L^2$, the lower semicontinuity of weak upper gradients (\ref{lsc}) implies that $f \in W^{1,2}(X \times Y)$ and
\[
|Df|_{X \times Y} \leq G
\]
where $G$ is any weak limit of $\lip_{X\times Y} (f_k)$. So,
\[
|Df|_{X \times Y} \leq 2C_1 |Df|_{\BL},
\]
which together with Proposition \ref{oneinclusion} concludes the proof. 
\end{proof}

\section{Warped products}

\bigskip \begin{definition}
Let $(X,d_X)$ and $(Y,d_Y)$ be length spaces, and $w_d \colon Y \to [0,\infty)$ a continuous function. Let $\gamma = (\gamma^X,\gamma^Y)$ be a curve such that $\gamma^X$ and $\gamma^Y$ are absolutely continuous. Then the $w_d$-length of $\gamma$ is defined as
\[
\ell_w(\gamma) = \lim_{\tau} \sum_{i=1}^n \sqrt{d_Y^2(\gamma^Y_{t_{i-1}},\gamma^Y_{t_{i}}) + w_d^2(\gamma^Y_{t_{i-1}})d_X^2(\gamma^X_{t_{i-1}},\gamma^X_{t_{i}})},
\]
where $\tau$ is a partition of $[0,1]$ and the limit is taken over refinement ordering of partitions.
\end{definition}
The limit exists and 
\[
\ell_w(\gamma) = \int_0^1 \sqrt{|\dot{\gamma}^Y_t|^2 + w_d^2(\gamma_t^Y)|\dot{\gamma}^X_t|^2}\,dt.
\]

\bigskip \begin{definition}
Let $(X,d_X)$ and $(Y,d_Y)$ be length spaces, and $w_d \colon Y \to [0,\infty)$ a continuous function. We define a pseudo-metric $\dist_w$ on on the space $X \times Y$ by 
\[
    \dist_w(p,q) = \inf\{\ell_w(\gamma) \mid  \text{$\gamma^X \in AC([0,1],X)$, $\gamma^Y\in AC([0,1],Y)$, and $\gamma_0=p, \gamma_1=q$}\},
\]
for any $p,q \in X \times Y$.
\end{definition}
\bigskip
The pseudo-metric induces an equivalent relation on $X \times Y$ given by $(x,y) \sim (x',y')$ if $d_w((x,y),(x'y'))=0$ and hence a metric on the quotient. We denote the completion of such quotient by $(X \times_w Y,d_w)$. If both $X$ and $Y$ are separable, so is $X \times_w Y$. Let $\pi \colon X \times Y \to X \times_w Y$ be the quotient map.

\bigskip \begin{definition}
Let $(X,d_X,m_X)$ and $(Y,d_Y,m_Y)$ be complete separable and length metric spaces equipped with non-negative Radon measures. Assume that $m_X(X) < \infty$ and let $w_d,w_m \colon Y \to [0,\infty)$ be continuous functions. Then the warped product $(X \times_w Y, d_w)$ is defined as above and the Radon measure $m_w$ is defined as
\[
m_w =\pi_*((w_m m_Y)\times m_X).
\]
\end{definition}
Note that the assumption that $m_X$ is a finite measure is needed to ensure that $m_w$ is Radon (it is always Borel). See \cite{gigli2018sobolev} after Definition 2.9 for more details. Following the notation conventions in \cite{gigli2018sobolev}, with a slight abuse, we will denote an element of $X \times_w Y$ by $(x,y)$.

\bigskip \begin{definition}
As a set, the Beppo Levi space $\BL_w(X,Y)$ is the subset of $L^2(X \times_w Y, m_w)$ of all functions $f$ such that
\begin{itemize}
    \item for $m_X$-a.e. $x \in X$, we have $f^{(x)}=f(x,\cdot) \in W^{1,2}(Y,w_m m_Y)$;
    \item for $w_m m_Y$-a.e. $t \in Y$, we have $f^{(t)}=f(\cdot, t) \in W^{1,2}(X)$;
    \item the function 
    \[
    |Df|_{\BL_w} = \sqrt{w_d^{-2}|Df^{(t)}|_X^2(x) + |Df^{(x)}|_Y^2(t)}
    \]
    belongs to $L^2(X \times_w Y, m_w)$.
\end{itemize}
On $\BL_w(X,Y)$ we put the norm
\[
\|f\|_{\BL_w(X,Y)} = \sqrt{\|f\|_{L^2}^2 + \||Df|_{\BL_w}\|_{L^2}^2}.
\]
\end{definition}

To handle the warped case we need to introduce an auxiliary space:
\bigskip \begin{definition}\label{Vset}
Let $\mathcal{V} \subset \BL_w(X,Y)$ be the space of functions $f$ which are identically $0$ on $X \times \Omega$, where $\Omega$ is an open set that satisfies $\{w_m = 0\} \subset \Omega \subset Y$. $\BL_{0,w}(X,Y) \subset \BL_w(X,Y)$ is defined as the closure of $\mathcal{V}$ in  $\BL_w(X,Y)$.
\end{definition}
\bigskip
We want to compare the Beppo-Levi space with the Sobolev space on the warped product, and the respective notions of minimal upper gradients. The main result of this section is the following.

\begin{theorem}\label{Thm5}
    Let $X$ and $Y$ satisfy Assumption \ref{XAssum} and \ref{YAssum}, and let $w_d, w_m \colon Y \rightarrow [0, \infty)$ be continuous functions such that $\{w_d = 0\}\subset\{w_m = 0\}$. Then 
    \[
\BL_{0,w}(X,Y) \subset W^{1,2}(X \times_w Y)   \subset  \BL_w(X,Y),
\]
and, for every $f \in W^{1,2}(X \times_w Y) \subset \BL_w(X,Y)$, the inequalities
\[
   |Df|_{\BL_w} \leq |Df|_{X \times_w Y}\leq C_0 |Df|_{\BL_w}
\]
hold $m_w$-a.e., with $C_0>0$ is as in Theorem \ref{Thm3.7}.
\end{theorem}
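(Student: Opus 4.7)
The plan is to mimic the proof of Theorem \ref{Thm3.7}, working away from the degenerate locus $\{w_m = 0\}$, where thanks to the hypothesis $\{w_d = 0\} \subset \{w_m = 0\}$ the warping factor $w_d$ is also bounded below. In that region the warped metric is locally bi-Lipschitz to a Cartesian product metric with $X$-scaling $w_d$, and the role of the space $\BL_{0,w}(X,Y)$ is precisely to keep the construction away from the singular set.

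First I would establish the easy inclusion $W^{1,2}(X \times_w Y) \subset \BL_w(X,Y)$ with $|Df|_{\BL_w} \leq |Df|_{X \times_w Y}$, paralleling Proposition \ref{oneinclusion} via a test-plan argument. The crucial observation is that for $\gamma = (\gamma^X,\gamma^Y)$ the warped metric derivative is $|\dot\gamma|_w = \sqrt{|\dot\gamma^Y|^2 + w_d(\gamma^Y)^2|\dot\gamma^X|^2}$, so for $Y$-only motions one has $|\dot\gamma|_w = |\dot\gamma^Y|$ and for $X$-only motions at height $t$ one has $|\dot\gamma|_w = w_d(t)|\dot\gamma^X|$. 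Disintegrating a test plan against the two projections shows that $|Df|_{X\times_w Y}(x,\cdot)$ is a weak upper gradient for $f(x,\cdot)$ on $(Y,w_m m_Y)$ for $m_X$-a.e.\ $x$, and $w_d(t)|Df|_{X\times_w Y}(\cdot,t)$ is a weak upper gradient for $f(\cdot,t)$ on $X$ for $w_m m_Y$-a.e.\ $t$. Squaring and adding yields $|Df|_{\BL_w}^2 = w_d^{-2}|Df^{(t)}|_X^2 + |Df^{(x)}|_Y^2 \leq |Df|_{X\times_w Y}^2$ $m_w$-a.e.

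For the reverse inclusion, by \eqref{lsc} and the definition of $\BL_{0,w}$ as the $\BL_w$-closure of $\mathcal{V}$, it suffices to prove $|Df|_{X\times_w Y} \leq C_0 |Df|_{\BL_w}$ when $f \in \mathcal{V}$ vanishes on $X \times \Omega$ for some open $\Omega \supset \{w_m = 0\}$. Truncation and multiplication by Lipschitz cutoffs (justified by \eqref{ProdRule}) reduce to $f$ bounded with $\supp f \subset X_0 \times K$, where $X_0 \subset X$ is bounded and $K \subset Y \setminus \Omega$ is compact; on a slight thickening of $K$ the continuous functions $w_d, w_m$ are uniformly bounded between positive constants and $w_d$ is uniformly continuous. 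I would then reproduce the construction of Lemma \ref{second}: using the cubes $Q_{i,k}$ of Lemma \ref{cubes} (chosen in the unwarped space $(Y,\dist_Y,m_Y)$, which is doubling by assumption), define $F_k(x,t) = \sum_i \chi_{i,k}(t) f_{k,i}(x)$ with $f_{k,i}(x) = \fint_{Q_{i,k}} f(x,s)\,dm_Y(s)$, then Lipschitz-approximate each $f_{k,i}$ via Lemma \ref{densitylip} to obtain $f_k \in \Lip(X \times_w Y) \cap \BL_w(X,Y)$ with $f_k \to f$ in $L^2(X \times_w Y, m_w)$. Only cubes meeting $K$ contribute, and for $k$ large these lie in the region where $w_d$ oscillates by at most $\eta_k \to 0$, so the estimates \eqref{notFTC} and \eqref{FTC} go through with minor modifications: the $\partial/\partial t$ bound uses only the Poincar\'e inequality on $Y$, while the $\partial/\partial x$ bound acquires the weight $w_d^{-2}(t)$, which is nearly constant on each cube and can be pulled out with a multiplicative error $(1+\eta_k)$. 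The warped analogue of Lemma \ref{lem.lip},
\[
\lip_{X\times_w Y}(f_k)^2(x,t) \leq 4\bigl(w_d^{-2}(t)\lip_X(f_k(\cdot,t))^2(x) + \lip_Y(f_k(x,\cdot))^2(t)\bigr),
\]
follows by the same triangle-inequality scheme together with the first-order asymptotic $\dist_w((y,s),(x,t))^2 \approx \dist_Y(s,t)^2 + w_d(t)^2\dist_X(y,x)^2$ near a regular point. Combined with lower semicontinuity, this yields $|Df|_{X\times_w Y} \leq C_0 |Df|_{\BL_w}$ with the same $C_0$ as in Theorem \ref{Thm3.7}.

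The hardest step will be the careful bookkeeping of the $w_d$-dependence in the cube-by-cube estimates: one must choose the mesh $1/k$ smaller than the modulus of continuity of $w_d$ on $K$ so that the weight $w_d^{-2}$ can be extracted from each sum with a vanishing error, and one must verify that \eqref{contsublin} applied fiberwise carries through in the warped setting. The strict containment $\{w_d = 0\} \subset \{w_m = 0\} \subset \Omega$ is essential throughout: it guarantees that the singular locus of the warping is excluded from $\supp f$, so that $w_d^{-2}$ remains uniformly bounded where it matters and the local bi-Lipschitz comparison with a Cartesian product is uniform, allowing the whole machinery of Theorem \ref{Thm3.7} to be imported with no change in the constants.
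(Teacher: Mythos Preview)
Your overall strategy is sound, but it diverges from the paper's route and leaves one point underdeveloped.

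\textbf{Different route.} You propose to redo the entire approximation machinery of Lemma~\ref{second} (cube averages, Lipschitz replacement, Poincar\'e estimate) directly in the warped setting, tracking the weight $w_d^{-2}$ cube by cube and proving a warped analogue of Lemma~\ref{lem.lip}. The paper instead treats Theorem~\ref{Thm3.7} as a black box and reduces to it by a \emph{comparison} argument: near any $t_0$ with $w_m(t_0)>0$ one freezes $w_d$ and $w_m$ outside a small ball to obtain auxiliary warping functions $\overline{w_d},\overline{w_m}$ bounded away from $0$ and $\infty$; the resulting space $X\times_{\overline w}Y$ is globally bi-Lipschitz to the Cartesian product $\overline X\times Y$ with $\overline X=(X,w_d(t_0)\dist_X,w_m(t_0)m_X)$, and a general lemma (Lemma~\ref{comparable}) transfers Sobolev classes and gradient bounds across such comparisons with the bi-Lipschitz constant as multiplicative error. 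Letting the localization radius shrink makes that error $(1+\varepsilon)^2\to 1$, so the constant $C_0$ is inherited exactly. Your approach would also work and is self-contained, but is substantially more bookkeeping; the paper's modular reduction avoids re-proving warped versions of Lemmas~\ref{lem.lip} and~\ref{second} and makes the equality of constants transparent.

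\textbf{A gap to close.} You only argue the upper bound $|Df|_{X\times_w Y}\le C_0|Df|_{\BL_w}$ for $f\in\mathcal V$, and hence by closure for $f\in\BL_{0,w}$. But the theorem asserts this inequality for every $f\in W^{1,2}(X\times_w Y)$, and in general one only knows $W^{1,2}\subset\BL_w$, not $W^{1,2}\subset\BL_{0,w}$. The paper handles this in Proposition~\ref{Prop312} by the localization above: given any $f\in W^{1,2}$ and any regular point $t_0$, multiply by a cutoff $\chi$ supported where $w_d,w_m>0$, apply the comparison to $\chi f$, and use locality of minimal weak upper gradients on $\{\chi=1\}$; covering $\{w_m>0\}$ gives the inequality $m_w$-a.e. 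Your scheme can be repaired the same way, but as written it does not reach the full statement.
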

 \bigskip

The proof of Theorem \ref{Thm5} is decomposed into Propositions \ref{prop6}, \ref{Prop312}, \ref{prop8} below.
We will continue denoting the minimal weak upper gradient on the Cartesian product $X \times Y$ simply by $|Df|$, while we will denote the minimal weak upper gradient on the warped product by $|Df|_{X \times_w Y}$.

\bigskip \begin{proposition} \label{prop6}
    We have $ W^{1,2}(X \times_w Y) \subset \BL_w(X,Y)$.
\end{proposition}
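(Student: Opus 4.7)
My plan is to prove the proposition by unpacking the meaning of $f\in W^{1,2}(X\times_w Y)$, constructing test plans on the warped product that move only in one factor at a time, and then transferring the resulting Sobolev bounds to each slice. Let $G := |Df|_{X\times_w Y}\in L^2(m_w)$ be the minimal weak upper gradient. I need to verify all three conditions in the definition of $\BL_w(X,Y)$: that $f(x,\cdot)\in W^{1,2}(Y,w_m m_Y)$ for $m_X$-a.e.\ $x$, that $f(\cdot,t)\in W^{1,2}(X)$ for $w_m m_Y$-a.e.\ $t$, and that $|Df|_{\BL_w}\in L^2(m_w)$. The approach is parallel to the Cartesian case recorded as Proposition \ref{oneinclusion}, with the warping only modifying the metric derivative of purely horizontal curves by a factor of $w_d$.

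For the vertical slices, I would fix a test plan $\eta^Y$ on $(Y,d_Y,w_m m_Y)$ and a probability measure $\mu_X$ on $X$ of bounded support and bounded density with respect to $m_X$, and lift them to the warped product via the map $(x,\gamma^Y)\mapsto\bigl(t\mapsto(x,\gamma^Y_t)\bigr)$ post-composed with the quotient map $q\colon X\times Y\to X\times_w Y$, pushing forward $\mu_X\otimes\eta^Y$ to obtain a probability measure on $C([0,1],X\times_w Y)$. From the length formula for $\ell_w$ such lifted curves have warped metric derivative equal to $|\dot\gamma^Y_t|_Y$, since the horizontal component of the velocity vanishes. Bounded compression follows directly from $m_w=q_*(w_m m_Y\otimes m_X)$ together with the bounded density of $\mu_X$ and the bounded compression of $\eta^Y$, and the kinetic energy bound is inherited from $\eta^Y$. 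Plugging the lifted plan into the defining Sobolev inequality for $f$ and unpacking with Fubini yields, for $m_X$-a.e.\ $x$, that $G(x,\cdot)$ is a weak upper gradient of $f(x,\cdot)$ on $(Y,w_m m_Y)$, and in particular $|Df^{(x)}|_Y(t)\leq G(x,t)$.

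For the horizontal slices, I would fix a test plan $\eta^X$ on $X$ and a probability measure $\mu_Y$ on $Y$ with bounded density w.r.t.\ $w_m m_Y$, concentrated on a set where $w_d$ is bounded above and below; such a set exists inside $\{w_m>0\}\subset\{w_d>0\}$ by the standing assumption and continuity of $w_d$ and $w_m$. Lifting $\eta^X\otimes\mu_Y$ to the plan whose curves are $t\mapsto(\gamma^X_t,y)$, the warped metric derivative equals $w_d(y)|\dot\gamma^X_t|_X$, and bounded compression together with finite kinetic energy follow from the bounds on $w_d$ on $\supp\mu_Y$. The Sobolev inequality applied to this lifted plan then shows that $w_d(y)G(\cdot,y)$ is a weak upper gradient of $f(\cdot,y)$ for $w_m m_Y$-a.e.\ $y$, yielding $|Df^{(y)}|_X(x)\leq w_d(y)G(x,y)$, and hence $w_d(y)^{-1}|Df^{(y)}|_X(x)\leq G(x,y)$ wherever $w_m>0$.

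Combining the two slice estimates gives
\[
|Df|_{\BL_w}^2 = w_d^{-2}|Df^{(t)}|_X^2+|Df^{(x)}|_Y^2 \leq 2G^2 \quad m_w\text{-a.e.},
\]
so $|Df|_{\BL_w}\leq \sqrt{2}\,G \in L^2(m_w)$, completing the verification that $f\in\BL_w(X,Y)$. The main technical obstacle is ensuring that the lifted plans genuinely satisfy the test plan axioms on the quotient space $(X\times_w Y,d_w,m_w)$: bounded compression has to be checked against $m_w$ rather than $m_X\otimes m_Y$, and one must confirm that the equivalence relation collapsing by the quotient does not corrupt integrability or Borel measurability of the lifted curve map. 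The hypothesis $\{w_d=0\}\subset\{w_m=0\}$ is what makes this clean, since it guarantees that the $m_w$-relevant part of the space never sees the degenerate identifications where horizontal curves have zero length.
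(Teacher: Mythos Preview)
Your route via lifted test plans is genuinely different from the paper's, which instead approximates $f$ in energy by Lipschitz functions $f_n$ on $X\times_w Y$ (Lemma~\ref{densitylip}), uses the elementary pointwise inequalities $\lip_{X\times_w Y}(f_n)(x,t)\geq \lip_Y(f_n^{(x)})(t)$ and $\lip_{X\times_w Y}(f_n)(x,t)\geq w_d(t)^{-1}\lip_X(f_n^{(t)})(x)$, and then passes to the limit with Fatou and the lower semicontinuity \eqref{lsc} of minimal weak upper gradients. That argument never manipulates test plans directly and so avoids the issue below.

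The real gap in your outline is the Fubini step, not the compression or measurability issues you flag at the end. From the Sobolev inequality on $X\times_w Y$ applied to the lifted plan built from $\mu_X$ and $\eta^Y$ you obtain only an integrated bound, and by varying $\mu_X$ over normalized indicators you can indeed extract that, for \emph{this particular} $\eta^Y$, the inequality
\[
\int |f(x,\gamma^Y_1)-f(x,\gamma^Y_0)|\,d\eta^Y(\gamma^Y) \;\leq\; \int\!\int_0^1 G(x,\gamma^Y_s)\,|\dot\gamma^Y_s|\,ds\,d\eta^Y(\gamma^Y)
\]
holds for $m_X$-a.e.\ $x$. But to conclude that $G(x,\cdot)$ is a weak upper gradient of $f(x,\cdot)$ you need this for \emph{every} test plan $\eta^Y$ on a common $m_X$-full set of $x$'s, and the exceptional null set in $X$ depends on $\eta^Y$. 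Since the family of test plans is uncountable, the union of these exceptional sets need not be null. This quantifier exchange can be repaired---for instance via the equivalence of the test-plan and Newtonian Sobolev classes together with a modulus argument, or by exhibiting a countable family of test plans that suffices to detect weak upper gradients---but that repair is the substantive part of the argument and is absent from the sketch. The paper's Lipschitz-approximation proof sidesteps the problem entirely because the comparison of local Lipschitz constants is a pointwise inequality valid at every $(x,t)$ before any integration or choice of plan is made.
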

\begin{proof}
Let $f\in W^{1,2}(X \times_w Y)$. Then by Lemma \ref{densitylip}, we can find a sequence $f_n$ of Lipschitz functions on $X \times_w Y$ such that $f_n \rightarrow f$ and $\lip_{X \times_w Y}(f_n) \rightarrow |Df|_{X \times_w Y}$ in $L^2(X \times_w Y)$.  Passing to a subsequence, which we do not relabel, we can assume that $\|f_n-f\|_{L^2(X \times_w Y,m_w)}<n^{-2}$. Then

\begin{align*}
    &\left\|\sum_{n=1}^{\infty} \left\|f_n^{(t)} - f^{(t)}\right\|_{L^2(X,m_X)}\right\|_{L^2(Y,w_m m_Y)}\\
   &  \leq \sum_{n=1}^{\infty} \left\|\left\|f_n^{(t)} - f^{(t)}\right\|_{L^2(X,m_X)}\right\|_{L^2(Y,w_m m_Y)}\\
   & = \sum_{n=1}^{\infty} \left\|f_n-f \right\|_{L^2(X \times_w Y,m_w)} < \infty
\end{align*}

Passing to a subsequence, which we do not relabel, we can assume that $\|f_n-f\|_{L^2(X \times_w Y,m_w)}<n^{-4}$. Then
\begin{align*}
    &\left\|\sum_{n=1}^{\infty} \left\|f_n^{(t)} - f^{(t)}\right\|_{L^2(X,m_X)}\right\|_{L^2(Y,w_m m_Y)}\\ 
    &= \left(\int_Y\; \left( \sum_{n=1}^{\infty} \frac{n}{n} \left\|f_n^{(t)} - f^{(t)}\right\|_{L^2(X,m_X)} \right)^2\, w_m(t)dm_Y(t)\right)^{1/2}\\
    &\leq \text{(Cauchy-Schwarz)}  \left(\int_Y\; \left(\sum_{n=1}^{\infty} \frac1{n^2}\right) \left(\sum_{n=1}^{\infty} n^2 \left\|f_n^{(t)} - f^{(t)}\right\|_{L^2(X,m_X)}^2 \right)\, w_m(t)dm_Y(t)\right)^{1/2}\\
    &=  \left(\sum_{n=1}^{\infty} \frac1{n^2}\right) \left(\int_Y\;  \sum_{n=1}^{\infty} n^2 \int_X |f_n^{(t)} - f^{(t)}|^2 \, d_mX(x) \, w_m(t)dm_Y(t)\right)^{1/2}\\
    &=  \frac{\pi^2}{6} \left(\sum_{n=1}^{\infty} n^2 \int_Y\;  \int_X   |f_n^{(t)}(x) - f^{(t)}(x)|^2 \, d_mX(x) \, w_m(t)dm_Y(t)\right)^{1/2}\\
    & = \frac{\pi^2}{6} \left(\sum_{n=1}^{\infty} n^2 \left\|f_n-f \right\|_{L^2(X \times_w Y,m_w)}^2 \right)^{1/2} < \infty
\end{align*}

This shows that for $w_mm_Y$-a.e. $t\in Y$ we have $\sum_{n=1}^{\infty} \|f_n^{(t)} - f^{(t)}\|_{L^2(X,m_X)}<\infty$ and so in particular $f_n^{(t)}\rightarrow f^{(t)}$ in $L^2(X, m_X)$. Similarly for $m_X$-a.e. $x\in X$, we have $f_n^{(x)}\rightarrow f^{(x)}$ in $L^2(Y, w_mm_Y)$.

Now observe that for $(x,t)\in X \times_w Y$ we have:
\begin{align}
    \begin{split}
        \lip_{X \times_w Y}(f_n)(x,t) &= \limsup_{(y,s)\rightarrow(x,t)}\frac{|f_n(y,s)-f_n(x,t)|}{d_w((y,s),(x,t))} \\
        &\geq \limsup_{s\rightarrow t}\frac{|f_n(x,s)-f_n(x,t)|}{d_w((x,s),(x,t))} \\
        &= \limsup_{s\rightarrow t}\frac{|f_n^{(x)}(s)-f_n^{(x)}(t)|}{d_Y(s,t)} = \lip_Y(f_n^{(x)})(t).
    \end{split}
\end{align}

Then, by Fatou's lemma:
\begin{align}
    \begin{split}
        &\int_X \liminf_{n\rightarrow \infty} \int_Y \, \lip_Y(f_n^{(x)})^2(t) w_m(t)dm_Y(t) dm_X(x)\\
        &\leq \liminf_{n\rightarrow \infty} \int_{X \times_w Y} \lip_Y(f_n^{(x)})^2(t) dm_w(x,t)\\
        &\leq \liminf_{n\rightarrow \infty} \int_{X \times_w Y} \lip_{X \times_w Y}(f_n)^2(x,t) dm_w(x,t)\\
       & =  \int_{X \times_w Y} |Df|^2_{X \times_w Y} dm_w(x,t) < \infty.
    \end{split}
\end{align}

Since $f_n^{(x)}\rightarrow f^{(x)}$ in $L^2(Y, w_mm_Y)$ for $m_x$-a.e. $x\in X$, the last inequality together with the lower semicontinuity of minimal weak upper gradients gives that $f^{(x)} \in W^{1,2}(Y, w_mm_Y)$ for $m_x$-a.e. $x\in X$ and
\begin{equation}\label{xineq}
     \int_{X \times_w Y} |Df^{(x)}|_Y^2(t) dm_w(x,t)\; \leq \int_{X \times_w Y} |Df|^2_{X \times_w Y} dm_w(x,t).
\end{equation}

With an analogous argument we can get conditions on $f^{(t)}$.
Starting from the bound:
\begin{align}
    \begin{split}
        \lip_{X \times_w Y}(f_n)(x,t) &= \limsup_{(y,s)\rightarrow(x,t)}\frac{|f_n(y,s)-f_n(x,t)|}{d_w((y,s),(x,t))} \\
        &\geq \limsup_{y\rightarrow x}\frac{|f_n(y,t)-f_n(x,t)|}{d_w((y,t),(x,t))} \\
        &= \limsup_{y\rightarrow x}\frac{|f_n^{(t)}(y)-f_n^{(t)}(x)|}{w_d(t)d_X(x,y)} = \frac{1}{w_d(t)} \lip_X(f_n^{(t)})(x).
    \end{split}
\end{align}

This inequality, valid for every $t\in Y$ such that $w_d(t)>0$, grants that $f^{(t)} \in W^{1,2}(X)$ for $w_mm_Y$-a.e. $t\in Y$ (here we are using the assumption that $\{w_d = 0 \} \subset \{w_m = 0 \}$) and that
\begin{equation}\label{tineq}
     \int_{X \times_w Y} \frac{|Df^{(t)}|_X^2(x)}{w_d^2(t)} dm_w(x,t)\; \leq \int_{X \times_w Y} |Df|^2_{X \times_w Y} dm_w(x,t).
\end{equation}
The bounds \eqref{xineq} and \eqref{tineq} ensure that $f \in \BL_w(X,Y)$ so that the desired inclusion is proved.
\end{proof}

\bigskip \begin{lemma}[Lemma 3.11 in \cite{gigli2018sobolev}]\label{comparable}
    Let $X$ be a set, $d_1,\,d_2$ two distances on $X$ and $m_1,\, m_2$ two measures. Assume also that $(X,d_1,m_1)$ and  $(X,d_2,m_2)$ are metric spaces that satisfy Assumption \ref{XAssum} and that for some $C>0$ we have $m_2\leq Cm_1$ and that for some $L>0$ we have $d_1\leq Ld_2$. Then, denoting by $S(X_1)$ and $S(X_2)$ the Sobolev classes relative to $(X,d_1,m_1)$ and $(X,d_2,m_2)$ respectively and by $|Df|_1$ and $|Df|_2$ the associated minimal weak upper gradients, we have $S(X_1)\subset S(X_2)$
    and for every $f\in S(X_1)$ the inequality $|Df|_2\leq L|Df|_1$ holds $m_2$-a.e.
\end{lemma}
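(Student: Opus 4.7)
The plan is to prove the inclusion by directly invoking the test-plan characterization of the Sobolev class in Definition \ref{def_sob}. The core observation is that any test plan $\pi$ on $(X,d_2,m_2)$ can be viewed as a test plan on $(X,d_1,m_1)$ after absorbing the constants coming from $m_2 \leq C m_1$ and $d_1 \leq L d_2$; once this is done, the defining inequality for $f\in S(X_1)$ transfers to a defining inequality for $f$ on $X_2$ with gradient $L|Df|_1$.

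First I would verify that the three conditions defining a test plan transfer. The bounded-compression condition is immediate: if $(e_t)_\#\pi \leq C_\pi m_2$, then $(e_t)_\#\pi \leq C_\pi C m_1$. For absolute continuity, the inequality $d_1 \leq L d_2$ means every $d_2$-absolutely continuous curve is $d_1$-absolutely continuous, and dividing the bound $d_1(\gamma_s,\gamma_t) \leq L d_2(\gamma_s,\gamma_t) \leq L\int_s^t G(r)\,dr$ by $|t-s|$ and taking the limit produces $|\dot\gamma|_{d_1}(s) \leq L |\dot\gamma|_{d_2}(s)$ for a.e.\ $s$. Consequently
\[
\int\int_0^1 |\dot\gamma|_{d_1}^2\,ds\,d\pi \leq L^2 \int\int_0^1 |\dot\gamma|_{d_2}^2\,ds\,d\pi < \infty,
\]
so $\pi$ is a test plan on $X_1$ as well; note that $d_2$-continuity automatically implies $d_1$-continuity, so there is no issue with the underlying curve space.

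Next, for $f \in S(X_1)$ with minimal weak upper gradient $|Df|_1$, applying Definition \ref{def_sob} to $\pi$ as a test plan on $X_1$ yields
\[
\int |f(\gamma_1)-f(\gamma_0)|\,d\pi \leq \int\int_0^1 |Df|_1(\gamma_s)|\dot\gamma_s|_{d_1}\,ds\,d\pi \leq L\int\int_0^1 |Df|_1(\gamma_s)|\dot\gamma_s|_{d_2}\,ds\,d\pi.
\]
This shows $L|Df|_1$ is a $2$-weak upper gradient of $f$ on $(X,d_2,m_2)$. The required integrability $L|Df|_1 \in L^2(X,m_2)$ is immediate from $|Df|_1 \in L^2(X,m_1)$ together with $m_2 \leq C m_1$. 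Hence $f \in S(X_2)$, and by minimality $|Df|_2 \leq L|Df|_1$ holds $m_2$-a.e.

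There is no significant obstacle here: the proof is essentially a routine pushforward of the test-plan description through the coarser metric and dominated measure. The only mildly delicate point is the pointwise comparison $|\dot\gamma|_{d_1} \leq L|\dot\gamma|_{d_2}$ of metric derivatives, but this follows directly from the defining limit in Theorem 1.1.2 of \cite{ambrosio2005gradient} and the Lipschitz contraction between the two distances.
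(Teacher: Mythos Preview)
The paper does not give its own proof of this lemma: it is quoted verbatim as Lemma~3.11 of \cite{gigli2018sobolev} and used as a black box. Your argument is correct and is precisely the natural one---verify directly from Definition~\ref{def_sob} that every $d_2$-test plan is a $d_1$-test plan (via $m_2\leq Cm_1$ for compression and $|\dot\gamma|_{d_1}\leq L|\dot\gamma|_{d_2}$ for the energy bound), then read off that $L|Df|_1$ is a weak upper gradient on $(X,d_2,m_2)$; this is exactly the argument in \cite{gigli2018sobolev}. The only technical wrinkle you might mention for completeness is that the inclusion $C([0,1],(X,d_2))\hookrightarrow C([0,1],(X,d_1))$ is continuous (hence Borel), so pushing $\pi$ forward through it yields a genuine Borel probability measure on the target curve space, and likewise $d_1$-Borel functions are automatically $d_2$-Borel since the $d_2$-topology is finer; but these points are routine.
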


\bigskip \begin{proposition}\label{Prop312}
    Let $f \in W^{1,2}(X \times_w Y) \subset \BL_w(X,Y)$. Then $|Df|_{\BL_w} \leq |Df|_{X \times_w Y} \leq C_0 |Df|_{\BL_w}$ $m_w$-a.e., where $C_0$ is as in Theorem \ref{Thm3.7}.
\end{proposition}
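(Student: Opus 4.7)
I take for granted the first inequality $|Df|_{\BL_w}\le|Df|_{X\times_w Y}$, which is essentially the pointwise refinement of the integral bounds \eqref{xineq} and \eqref{tineq} obtained by the test-plan technique of the previous proposition (applying its argument to constant-$x$ and constant-$y$ curves in the warped product, along which $d_w$ agrees with $d_Y$ and with $w_d(y)\,d_X$ respectively). My proposal concentrates on the main inequality $|Df|_{X\times_w Y}\le C_0|Df|_{\BL_w}$, which I plan to deduce from Theorem \ref{Thm3.7} by a localization-and-rescaling argument: locally one may freeze the weights $w_d,w_m$ to their values at a reference point, reducing the warped structure to a genuine Cartesian one up to a small multiplicative error.

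\textbf{Plan.} Since $m_w$ is concentrated on $\{w_m>0\}\subset\{w_d>0\}$ (by the assumption $\{w_d=0\}\subset\{w_m=0\}$), it suffices to prove the desired pointwise bound at a fixed point $(x_0,y_0)\in X\times_w Y$ with $w_d(y_0),w_m(y_0)>0$. Fix $\epsilon>0$; by continuity of $w_d$ and $w_m$, pick an open neighborhood $U\subset Y$ of $y_0$ on which $w_d$ and $w_m$ deviate from their values at $y_0$ by at most a factor $1\pm\epsilon$. On $X\times U$, compare the warped metric-measure structure $(d_w,m_w)$ with the \emph{constant-coefficient} Cartesian structure $(d_\star,m_\star)$ built from $(X,\,w_d(y_0)\,d_X,\,m_X)$ and $(U,\,d_Y,\,w_m(y_0)\,m_Y)$: the choice of $U$ forces
\[
(1-\epsilon)d_\star\le d_w\le(1+\epsilon)d_\star,\qquad (1-\epsilon)m_\star\le m_w\le(1+\epsilon)m_\star\quad\text{on }X\times U.
\]

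\textbf{Applying Theorem \ref{Thm3.7} and transferring.} Because the factor $(U,d_Y,w_m(y_0)m_Y)$ inherits from $Y$ the hypotheses of Assumption \ref{YAssum} (rescaling distances and measures by positive constants preserves the doubling constant and the Poincar\'e inequality with the same constants), Theorem \ref{Thm3.7} applies to the Cartesian product and gives $|Df|_\star \le C_0|Df|_{\BL_\star}$ $m_\star$-a.e.\ on $X\times U$. The Beppo--Levi gradient for the starred structure is
\[
|Df|_{\BL_\star}^2=|Df^{(x)}|_Y^2+w_d(y_0)^{-2}|Df^{(t)}|_X^2,
\]
which coincides with $|Df|_{\BL_w}^2$ at $(x_0,y_0)$ and is comparable to it up to a factor $(1\pm\epsilon)$ throughout $X\times U$. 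Two applications of Lemma \ref{comparable} transfer minimal upper gradients across the structures: $|Df|_{X\times_w Y}\le(1+\epsilon)|Df|_\star$ and $|Df|_{\BL_\star}\le(1+\epsilon)|Df|_{\BL_w}$ $m_w$-a.e.\ on $X\times U$. Chaining yields
\[
|Df|_{X\times_w Y}(x_0,y_0)\le C_0(1+\epsilon)^2|Df|_{\BL_w}(x_0,y_0),
\]
and letting $\epsilon\to0$ gives the desired pointwise bound at $(x_0,y_0)$.

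\textbf{Main obstacle.} The principal technical point is applying Lemma \ref{comparable} in a \emph{local} manner: that lemma compares two structures on the same underlying set \emph{globally}, whereas the comparison $d_w\asymp d_\star$ is a priori only valid for rectifiable curves that remain inside $X\times U$. I plan to handle this by exploiting the local character of minimal weak upper gradients --- multiplying $f$ by a Lipschitz cutoff supported in $X\times U$ (using the product rule \eqref{ProdRule}) preserves $|Df|_{X\times_w Y}$ and $|Df|_{\BL_w}$ on a smaller concentric set, and reduces the estimate to one purely on $X\times U$ viewed as a standalone metric-measure space, where Lemma \ref{comparable} applies globally.
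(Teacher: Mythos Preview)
Your overall strategy---localize near a point where the weights are positive, freeze the weights, compare to a Cartesian product via Lemma~\ref{comparable}, and invoke Theorem~\ref{Thm3.7}---is exactly the paper's approach. The gap is in how you plan to make the comparison rigorous.

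You propose to restrict to $X\times U$ and treat it as a standalone metric measure space, applying Theorem~\ref{Thm3.7} to the pair $(X,U)$. But Theorem~\ref{Thm3.7} requires the second factor to satisfy Assumption~\ref{YAssum}: completeness, measure doubling, and the Poincar\'e inequality. An open set $U\subsetneq Y$ is not complete, and the Poincar\'e inequality on $Y$ carries a dilation factor $\lambda\ge1$, so for balls $B\subset U$ close to $\partial U$ the enlarged ball $\lambda B$ escapes $U$ and the inequality need not hold for the restricted structure. Your cutoff trick localizes $f$, but it does not repair the structural hypotheses on the \emph{space} $U$; Lemma~\ref{comparable} would then be applied to a Cartesian product to which Theorem~\ref{Thm3.7} has not been shown to apply.

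The paper's fix is to avoid restricting the space altogether. After cutting off $f$ as you do, it introduces globally defined continuous weights $\overline{w_d},\overline{w_m}:Y\to(0,\infty)$ that agree with $w_d,w_m$ on a neighborhood of $t_0$ and are equal to the constants $w_d(t_0),w_m(t_0)$ outside a slightly larger neighborhood. The modified warped product $X\times_{\overline w}Y$ is then globally bi-Lipschitz (with factor $1+\varepsilon$) and has measure globally comparable to the genuine Cartesian product $\overline X\times Y$, where $\overline X=(X,w_d(t_0)\,d_X,w_m(t_0)\,m_X)$ and the second factor is the \emph{full} space $Y$. Now Lemma~\ref{comparable} applies globally in both directions, and Theorem~\ref{Thm3.7} applies to $(\overline X,Y)$ with no loss of hypotheses. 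Locality of minimal weak upper gradients then transfers the resulting estimate back to $|Df|_{X\times_w Y}$ and $|Df|_{\BL_w}$ on $X\times B(t_0,\delta)$.

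A minor additional point: the lower bound $|Df|_{\BL_w}\le|Df|_{X\times_w Y}$ is not simply read off from the integral estimates \eqref{xineq}--\eqref{tineq} of the previous proposition; in the paper it is obtained from the \emph{same} localization argument, since Theorem~\ref{Thm3.7} gives both inequalities for the Cartesian comparison space.
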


\begin{proof}
    Fix $\varepsilon >0$. Let $t_0$ be such that $w_m(t_0) >0$, and hence $w_d(t_0) >0$. By continuity we can find $\delta >0$ such that
    \begin{equation} \label{wdiscont}
        \left|\frac{w_d(t)}{w_d(s)} \right| \leq 1 + \varepsilon \qquad \text{for all } t,s \in \overline{B(t_0,3\delta)}.
    \end{equation}
Let $\chi \colon Y \to [0,1]$ be a Lipschitz function such that $\chi \equiv 1$ on $\overline{B(t_0,\delta)}$, and $\chi \equiv 0$ outside of $\overline{B(t_0,3\delta)}$. Define two continuous functions (in order to have the product being warped only around $t_0$) as follows:
\begin{equation}
\overline{w_d}(t) =
    \begin{cases}
    w_d(t) & \text{if }\dist_Y(t,t_0) \leq 2\delta, \\
    \frac{\dist_Y(t,t_0) - 2\delta}{\delta} w_d(t_0) + \frac{3\delta - \dist_Y(t,t_0)}{\delta}w_d(t) & \text{if } 2\delta < \dist_Y(t,t_0) < 3\delta, \\
    w_d(t_0) & \text{if } \dist_Y(t,t_0) \geq 3\delta.
    \end{cases}
\end{equation}
\begin{equation}
\overline{w_m}(t) =
    \begin{cases}
    w_m(t) & \text{if }\dist_Y(t,t_0) \leq 2\delta, \\
    \frac{\dist_Y(t,t_0) - 2\delta}{\delta} w_m(t_0) + \frac{3\delta - \dist_Y(t,t_0)}{\delta}w_m(t) & \text{if } 2\delta < \dist_Y(t,t_0) < 3\delta, \\
    w_m(t_0) & \text{if } \dist_Y(t,t_0) \geq 3\delta.
    \end{cases}
\end{equation}
and let $(X \times_{\overline{w}} Y, \dist_{\overline{w}},m_{\overline{w}})$ be the corresponding product space. Consider the function $\overline{f} \colon X \times_w Y \to \R$ defined by $\overline{f}(x,t) = \chi(t)f(t,x)$. Clearly $\overline{f} \in W^{1,2}(X \times_w Y)$, and hence to $\BL_w(X,Y)$. Since minimal upper gradients are local we know that
\[
|Df|_{X \times_w Y}=|D\overline{f}|_{X \times_w Y} \quad \text{and} \quad |Df|_{\BL_w}=|D\overline{f}|_{\BL_w} \quad \text{ $m_w$-a.e. on $X \times \overline{B(t_0,\delta)}$}.
\]
Because $\overline{f}$ is supported on $B(t_0,3\delta) \times X$, where $w_d$ is positive we can think of $\overline{f}$ as a function on $X \times_{\overline{w}} Y$. With this identification we have
\[
|D\overline{f}|_{X \times_w Y}=|D\overline{f}|_{X \times_{\overline{w}} Y} \quad \text{and} \quad |D\overline{f}|_{\BL_w}=|D\overline{f}|_{\BL_{\overline{w}}} \quad \text{ $m_w$-a.e. on $X \times \overline{B(t_0,2\delta)}$}.
\]

We now want to use this ``localized warped space'' to utilize the results for the Cartesian product we have obtained in the previous section. 

Consider first the space $\overline X := (X, w_d(t_0) \dist_X, w_m(t_0) m_X)$. This is simply our original metric measure space $(X, \dist_X, m_X)$ which has been re-scaled. Now consider $(\overline X \times Y, \dist, m)$ where with a slight abuse of notation we denote by $\dist$ and $m$ are the appropriate Cartesian metric and measure, respectively. Set $c= \min_{\overline{B(t_0,3\delta)}} w_m$ and $C= \max_{\overline{B(t_0,3\delta)}} w_m$. From the definition of $m_w$ we immediately have 
\[
c m \leq m_{\overline{w}} \leq C m,
\]
and, recalling (\ref{wdiscont}), we have 
\begin{equation}
    (1 + \varepsilon)^{-1} \dist \leq \dist_{\overline{w}} \leq (1 + \varepsilon)\dist.
\end{equation}
Let us denote by $\overline \BL := \BL(\overline X,Y)$. Using Lemma \ref{comparable}, recalling that $X \times_{\overline{w}} Y$ and $\overline X \times Y$ coincide as sets, we obtain
\[
 (1 + \varepsilon)^{-1} |D\overline{f}|_{\overline X \times Y} \leq |D\overline{f}|_{X \times_{\overline{w}}Y} \leq (1 + \varepsilon)|D\overline{f}|_{\overline X \times Y},
 \]
and  
\[(1 + \varepsilon)^{-1} |D\overline{f}|_{\overline \BL} \leq |D\overline{f}|_{\BL_{\overline{w}}} \leq (1 + \varepsilon)|D\overline{f}|_{\overline \BL}.
\]

We now exploit the Cartesian results applied to the pair $(\overline X,Y)$: by Theorem \ref{Thm3.7}, we know that 
\[
|D\overline{f}|_{\overline \BL}\leq|D\overline{f}|_{\overline X \times Y} \leq C_0 |D\overline{f}|_{\overline \BL} \quad m\text{-a.e.}.
\]
 Putting everything together we obtain
\[
(1+\varepsilon)^{-2} |Df|_{\BL_{\overline{w}}} \leq |Df|_{X \times_{\overline{w}}Y} \leq (1+ \varepsilon)^2 C_0 |Df|_{\BL_{\overline{w}}}
\]
$m_w$-a.e. on $X \times B(t_0,\delta)$. Because $t_0$ was arbitrary, and because every cover of $\{w_m >0\} \subset Y$ has a countable subcover, the conclusion holds by letting $\varepsilon \to 0$.
\end{proof}

\bigskip \begin{proposition} \label{prop8}
We have $\BL_{0,w}(X,Y) \subset W^{1,2}(X \times_w Y)$.
\end{proposition}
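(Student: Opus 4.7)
The plan is to first prove $\mathcal{V}\subset W^{1,2}(X\times_w Y)$, and then extend to the closure $\BL_{0,w}(X,Y)$ by a lower-semicontinuity argument.

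For $f\in\mathcal{V}$, by definition $f$ vanishes on $X\times\Omega$ for some open $\Omega\supset\{w_m=0\}$, so the support of $f$ lies in $X\times K$ with $K:=Y\setminus\Omega$ and $w_d,w_m>0$ on $K$ (the former using $\{w_d=0\}\subset\{w_m=0\}$). I would cover $K$ by countably many balls $\{B(t_i,\delta_i)\}$ such that on each enlarged ball $\overline{B(t_i,3\delta_i)}$ both $w_d$ and $w_m$ are comparable to their values at $t_i$, exactly as in the setup of the proof of Proposition \ref{Prop312}; using the doubling property of $Y$, I arrange this cover to be locally finite with some bounded multiplicity $N$. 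Choosing a Lipschitz partition of unity $\{\chi_i\}$ subordinate to this cover, I decompose $f=\sum_i f_i$ with $f_i:=\chi_i f\in\BL_w(X,Y)$, where each $f_i$ is supported in $X\times B(t_i,\delta_i)$.

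For each $f_i$ I then run the localization of Proposition \ref{Prop312} at $t_i$: introduce modified continuous warping functions $\overline{w_d^i},\overline{w_m^i}$ that agree with $w_d,w_m$ on $\overline{B(t_i,2\delta_i)}$ and are constant outside $\overline{B(t_i,3\delta_i)}$, so that the corresponding modified warped product is bi-Lipschitz and measure-comparable to a rescaled Cartesian product $(\overline{X_i}\times Y,\dist,m)$. Passing $f_i$ through these identifications via Lemma \ref{comparable}, I apply the Cartesian Theorem \ref{Thm3.7} to conclude $f_i\in W^{1,2}(\overline{X_i}\times Y)$ with a quantitative bound of the form $\||Df_i|_{\overline{X_i}\times Y}\|_{L^2}\lesssim C_0\||Df_i|_{\BL_w}\|_{L^2}$ (the implicit constants are uniform in $i$ because each local comparison is controlled by an $\varepsilon$ that can be made uniformly small). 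Pulling back via Lemma \ref{comparable} and invoking locality of the minimal weak upper gradient together with the identity $w=\overline{w^i}$ on a neighborhood of $\supp f_i$, I transfer this to $f_i\in W^{1,2}(X\times_w Y)$ with the same bound. Summing and using the sublinearity \eqref{sublinear} with the bounded multiplicity $N$, I obtain $f=\sum_i f_i\in W^{1,2}(X\times_w Y)$ together with a global estimate $\||Df|_{X\times_w Y}\|_{L^2}\lesssim_{N,C_0}\||Df|_{\BL_w}\|_{L^2}$.

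For general $g\in\BL_{0,w}(X,Y)$, choose $g_n\in\mathcal{V}$ with $g_n\to g$ in $\BL_w(X,Y)$. The previous bound applied to each $g_n$ shows that $\{|Dg_n|_{X\times_w Y}\}$ is bounded in $L^2(X\times_w Y, m_w)$, and $g_n\to g$ in $L^2$, so the lower semicontinuity (\ref{lsc}) of minimal weak upper gradients yields $g\in W^{1,2}(X\times_w Y)$, completing the argument. The main obstacle I anticipate is the glueing step: one must justify the locality used to match $|Df_i|_{X\times_w Y}$ with $|Df_i|_{X\times_{\overline{w^i}}Y}$ on $\supp f_i$, and one must ensure the constants in the Lemma \ref{comparable} comparisons and the subsequent summation do not degenerate as the cover is made finer, which is why the quantitative Cartesian bound $C_0$ from Theorem \ref{Thm3.7} (depending only on $C_Y$ and $C_P$) is crucial.
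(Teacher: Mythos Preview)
Your overall architecture---first show $\mathcal{V}\subset W^{1,2}(X\times_w Y)$, then pass to the closure using the quantitative bound from Proposition~\ref{Prop312} together with lower semicontinuity---matches the paper's. The gap is in the step for $f\in\mathcal{V}$.

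The transfer ``$f_i\in W^{1,2}(X\times_{\overline{w^i}}Y)\Rightarrow f_i\in W^{1,2}(X\times_w Y)$'' cannot be carried out by Lemma~\ref{comparable} or by locality as you propose. Lemma~\ref{comparable} requires \emph{global} inequalities between the two metrics and the two measures, but outside $\overline{B(t_i,3\delta_i)}$ the functions $\overline{w_d^i},\overline{w_m^i}$ are frozen at $w_d(t_i),w_m(t_i)$ while $w_d,w_m$ can be arbitrarily large or can vanish; neither direction of the lemma applies. And locality of minimal weak upper gradients is a statement about two \emph{functions} agreeing on a set within a fixed metric measure space, not about two \emph{metric measure structures} agreeing on a neighborhood of $\supp f_i$: test plans on $(X\times_w Y,d_w,m_w)$ may well send curves through the region where $w\neq\overline{w^i}$, and where $w_d=0$ the quotient $X\times_w Y$ is not even the same set as $X\times_{\overline{w^i}}Y$. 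You correctly flag this as the main obstacle, but nothing in the proposal resolves it.

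The paper circumvents the issue differently. It first reduces (via a cutoff) to $f\in\mathcal{V}$ with \emph{bounded} $Y$-support, say inside $B(t_1,R)$. By compactness of $\overline{B(t_1,R)}\setminus\Omega_{r/2}$, a \emph{single} global modification $w'$ can be chosen that agrees with $w$ on that set and is bounded between fixed positive constants on all of $Y$. Now Lemma~\ref{comparable} applies globally, yielding $W^{1,2}(X\times_{w'}Y)=W^{1,2}(X\times Y)=\BL(X,Y)$, so $f\in W^{1,2}(X\times_{w'}Y)$. Crucially, the paper then does \emph{not} invoke locality to pass to $W^{1,2}(X\times_w Y)$; instead it takes a $d_{w'}$-Lipschitz approximating sequence $f_n$ for $f$ (Lemma~\ref{densitylip}), multiplies by a further cutoff $\psi$ so the resulting $\tilde f_n$ are supported where $w=w'$, and verifies directly that the $\tilde f_n$ are also $d_w$-Lipschitz with $\sup_n\int\lip_{X\times_w Y}(\tilde f_n)^2\,dm_w<\infty$. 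Lower semicontinuity~\eqref{lsc} then gives $f\in W^{1,2}(X\times_w Y)$. This explicit Lipschitz-approximation route is precisely what replaces the ``locality'' step you are missing.
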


\begin{proof}
    Thanks to Proposition \ref{Prop312}, it is sufficient to prove that $\mathcal{V}\subset W^{1,2}(X \times_x Y)$, where $\mathcal{V}$ is defined as in Definition \ref{Vset}. Fix $k \in \N$, let $t_0 \in Y$, and let 
    \begin{equation} \label{cutoff}
    \psi_{n,k}(t) = \sum_{\substack{j \\ \supp \chi_{j,k} \cap B(t_0,n) \neq \varnothing}} \chi_{j,k}(t),
    \end{equation}
    where the functions $\chi_{i,k}$ are the partition of unity subordinate to the covering by ``cubes'' from Lemma \ref{cubes}. Observe that $\psi_{k,n} \equiv 1$ inside $B(t_0,n-1)$. With a slight abuse of notation we will denote $\psi_{n,k}$ simply as $\psi_n$, as $k$ will remained fixed throughout the proof. For $f\in \BL_w(X,Y)$ we define $f_n(x,t):= \psi_n(t)f(x,t)$, and note that by definition $f_n\in \BL_w(X,Y)$, while by the dominated convergence theorem and inequality (\ref{ProdRule}) we obtain $f_n\rightarrow f$ in $\BL_w(X,Y)$.
    
    The idea of the proof is as follows: we show that any $f\in \mathcal{V}$ with support contained in $Y\cap B(t_1,R)$ for $R>0$ and $t_1 \in Y$ belongs to $W^{1,2}(X \times_w Y)$. This together with Proposition \ref{Prop312}, which ensures $\BL$-convergence implies $W^{1,2}$-convergence, will complete the proof.

    Accordingly, fix such $f\in \mathcal{V}$ and for $r\in(0,1)$ let  $\Omega_r\subset Y$ be the r-neighborhood of $\{w_m=0\}$. Now, find $r\in(0,1)$ such that $f$ is $m_w$-a.e. zero on $X\times \Omega_{2r}$. Then, recalling that $\{w_d=0\}\subset\{w_m=0\}$ and by continuity and compactness we have that there exist constants $0<c<C<\infty$ such that
    \[
    c<w_d(t),\; w_m(t)<C, \quad \forall t\in  Y\cap B(t_1,R) \setminus \Omega_{\frac{r}{2}}.
    \]

    We now use a comparison argument similar to the one used in Proposition \ref{Prop312}. Let $w'_d$ and $w'_m$ two continuous functions which agree with $w_d$ and $w_m$ on $B(t_1,R) \setminus \Omega_{\frac{r}{2}}$ and such that $c<w'_d(t),\; w'_m(t)<C$ on the whole $Y$. Consider now the warped product $(X \times_{w'} Y, d_{w'}, m_{w'})$ and Cartesian product of $(X \times Y, d, m)$ of $X$ and $Y$. Then by Lemma \ref{comparable} and the properties of $w'_d,\; w'_m$ we have the following equalities of sets:
    \begin{equation}\label{set1}
        \BL_{w'}(X, Y) = \BL(X,Y) \quad \text{and} \quad W^{1,2}(X \times_{w'} Y) = W^{1,2}(X \times Y).
    \end{equation}
    Moreover by Theorem \ref{Thm3.7} the following equality of sets also holds:
     \begin{equation}\label{set2}
        \BL(X,Y) = W^{1,2}(X \times Y).
    \end{equation}
    Finally, putting together \eqref{set1} and \eqref{set2} we obtain:
    \begin{equation}\label{set3}
        \BL_{w'}(X,Y) =\BL(X,Y) = W^{1,2}(X \times Y)=W^{1,2}(X \times_{w'} Y).
    \end{equation}
    By construction of $w'_d,\; w'_m$ we have that $f\in \BL_{w'}(X,Y)$ so that, by equation \eqref{set3}, $f\in W^{1,2}(X \times_{w'} Y)$. By density in energy of Lipschitz functions (Lemma \ref{densitylip}) there exists a sequence of $d_{w'}$-Lipschitz functions $f_n$ that converges to $f$ in $L^2(X \times_{w'} Y)$ and we also have
    \[
    \sup_{n\in \mathbb{N}} \int \lip_{X \times_{w'} Y}(f_n)^2\, dm_{w'} < \infty
    \]
    uniformly bounded in $n$. From now on we assume $f_n$ is bounded for every $n\in \mathbb{N}$. This is possible up to replacing the original $f_n$ with $\min(\max(f_n,-C_n), C_n)$ for sufficiently large $C_n$.

    Now, similarly to the beginning of the proof, we find a Lipschitz function $\psi \colon Y\rightarrow [0,1]$ which is identically 0 on $\Omega_r \cup (Y\setminus B(t_1, R+1))$ and identically 1 on $Y \cap B(t_1, R) \setminus \Omega_{2r}$ and set $\Tilde{f}_n(t,x) := \psi(t) f_n(t,x)$.
    By construction the functions $\Tilde{f}_n$ are still $d_{w'}$-Lipschitz, they converge to $f\in L^2(X \times_{w'} Y)$ and satisfy
    \begin{equation}\label{sup'}
        \sup_{n\in \mathbb{N}} \int \lip_{X \times_{w'} Y}(\Tilde{f}_n)^2\, dm_{w'} < \infty.
    \end{equation}

    To conclude the proof we need to show that the functions $\Tilde{f}_n$ are also $d_{w}$-Lipschitz, converge to $f\in L^2(X \times_{w} Y)$ and satisfy
    \begin{equation}\label{sup}
     \sup_{n\in \mathbb{N}} \int \lip_{X \times_{w} Y}(\Tilde{f}_n)^2\, dm_{w} < \infty.
     \end{equation}

    First, $\Tilde{f}_n$ converges to $f\in L^2(X \times_{w} Y)$ since the functions $\Tilde{f}_n$ and $f$ are concentrated on $X \times (B(t_1,R) \setminus \Omega_r)$ and on this set the measures $m_w$ and $m_{w'}$ agree by definition. Moreover, since $w_d$ and $w_{d'}$ agree on $X \times (B(t_1,R) \setminus \Omega_r)$, by definition we also have 
    $$
    \lim_{(y,s)\rightarrow(x,t)} \frac{d_w((y,s), (x,t))}{d_{w'}((y,s), (x,t))} = 1 \quad \forall \; (x,t) \in X \times (B(t_1,R) \setminus \Omega_r),
    $$
    so that, in particular, $\lip_{X \times_{w} Y}(\Tilde{f}_n) = \lip_{X \times_{w'} Y}(\Tilde{f}_n)$. This, together with \eqref{sup'}, proves \eqref{sup}.

    It remains to prove that $\Tilde{f}_n$ are $d_{w}$-Lipschitz, that is, we need to prove that $\Lip(\Tilde{f}_n)<\infty$. Recall that, since we are on a length space, the Lipschitz constant of a function is equal to the supremum of the local Lispchitz constants. Then, by denoting $\Lip'(\Tilde{f}_n)$ the $d_{w'}$-Lipschitz constant and recalling that by construction of $\Tilde{f}_n$, $\Lip'(\Tilde{f}_n) < \infty$, we have:
    
    \[
    \Lip(\Tilde{f}_n) = \sup_{X \times_{w} Y} \lip_{X \times_{w} Y}(\Tilde{f}_n) = \sup_{X \times_{w'} Y} \lip_{X \times_{w'} Y}(\Tilde{f}_n) = \Lip'(\Tilde{f}_n) < \infty.
    \]
    Now, the conclusion follows from the lower semicontinuity of weak upper gradients \eqref{lsc}, and the bound $|D\Tilde{f}_n|_{X \times_{w} Y} \leq \lip_{X \times_{w} Y}(\Tilde{f}_n)$ that is valid $m_w$-a.e.. 
\end{proof}

By adding a few extra assumptions on the function $w_m$ we can improve the previous result and show that, with these additional assumptions, the inclusions above are all equalities.

\bigskip \begin{proposition}
    Assume that $w_m$ satisfies
\begin{gather}
\{w_m=0\} \subset Y \text{ is discrete;} \\
\text{$w_m$ decays at least linearly near its zeros, i.e. } \\
w_m(t) \leq C \inf_{ s \mid w_m(s)=0} \dist_Y(t,s), \quad \forall t \in Y. \nonumber
\end{gather}       
Also assume that $m_Y$ is an upper regular measure, that is, there exists a constant $c$ such that $m_Y(B(y,r))\leq cr$ for $y \in Y$, and $0 < r < 1$. Then $\BL_{0,w}(X,Y) =W^{1,2}(X \times_w Y)=\BL_{w}(X,Y)$.
\end{proposition}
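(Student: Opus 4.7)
The plan is to observe that the two previous propositions already give $\BL_{0,w}(X,Y) \subset W^{1,2}(X \times_w Y) \subset \BL_w(X,Y)$, so it remains to establish the reverse inclusion $\BL_w(X,Y) \subset \BL_{0,w}(X,Y)$; equivalently, that the set $\mathcal{V}$ of Definition \ref{Vset} is dense in $\BL_w(X,Y)$. The strategy is to take an arbitrary $f \in \BL_w(X,Y)$ and build approximants $f_\varepsilon \in \mathcal{V}$ converging to $f$ in the $\BL_w$-norm.

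First I would perform two standard reductions. Truncating $f$ at level $\pm M$ decreases all relevant weak upper gradients and converges back to $f$ in $\BL_w$ by dominated convergence, so one may assume $f$ is bounded. Multiplying by a Lipschitz cutoff supported in an exhausting family of balls in $Y$ (and invoking the product rule \eqref{ProdRule} slicewise) likewise reduces to $f$ having bounded support in $Y$. Since $Z := \{w_m = 0\}$ is closed and discrete, it is locally finite; hence only finitely many zeros $z_1,\dots,z_N$ of $w_m$ lie in the support of $f$.

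For small $\varepsilon > 0$ I would define the logarithmic cutoff
\[
\eta_\varepsilon(t) := \max\!\left\{0,\, \min\!\left\{1,\, \tfrac{\log(d_Y(t,Z)/\varepsilon^2)}{\log(1/\varepsilon)}\right\}\right\},
\]
so that $\eta_\varepsilon$ vanishes on the open neighborhood $\{d_Y(\cdot, Z) < \varepsilon^2\}$ of $Z$, equals one outside $\{d_Y(\cdot, Z) \leq \varepsilon\}$, and satisfies $m_Y$-a.e.
\[
|D\eta_\varepsilon|_Y(t) \leq \frac{\mathbf{1}_{\{\varepsilon^2 \leq d_Y(t,Z) \leq \varepsilon\}}}{d_Y(t,Z)\log(1/\varepsilon)}.
\]
Setting $f_\varepsilon := f \eta_\varepsilon$, the slicewise product rule gives $f_\varepsilon \in \BL_w(X,Y)$, and $f_\varepsilon \in \mathcal{V}$ because it vanishes on a neighborhood of $X \times Z$. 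Convergence $f_\varepsilon \to f$ in $L^2(m_w)$ is immediate from dominated convergence. Expanding $|D(f - f_\varepsilon)|_{\BL_w}^2$ via the slicewise product rule yields (up to a constant) two terms of the form $(1-\eta_\varepsilon)^2 \bigl(w_d^{-2}|Df^{(t)}|_X^2 + |Df^{(x)}|_Y^2\bigr)$, which vanish in $L^1(m_w)$ by dominated convergence, plus an error term $|f|^2 |D\eta_\varepsilon|_Y^2$ that, using $\|f\|_\infty \leq M$, Fubini, and $m_X(X) < \infty$, is controlled by $2 M^2 m_X(X) \int_Y |D\eta_\varepsilon|_Y^2\, w_m\, dm_Y$.

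The heart of the proof, and the expected main obstacle, is the key estimate
\[
\int_Y |D\eta_\varepsilon|_Y^2\, w_m\, dm_Y \xrightarrow[\varepsilon \to 0]{} 0.
\]
Using the linear decay $w_m(t) \leq C\, d_Y(t, Z)$ and the fact that only finitely many zeros are relevant (with disjoint transition annuli for small $\varepsilon$), this reduces to showing that $\int_{\{\varepsilon^2 \leq d_Y(t, z_i) \leq \varepsilon\}} d_Y(t, z_i)^{-1}\, dm_Y$ grows at most like $O(\log(1/\varepsilon))$ for each $z_i$. I would establish this by a dyadic decomposition of the annular transition into shells $\{2^k\varepsilon^2 \leq d_Y(\cdot, z_i) \leq 2^{k+1}\varepsilon^2\}$ for $k = 0, \dots, \lceil\log_2(1/\varepsilon)\rceil$, applying the doubling of $m_Y$ to bound each shell's measure. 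The $1/\log^2(1/\varepsilon)$ prefactor from $|D\eta_\varepsilon|^2$ then dominates, yielding the estimate and closing the chain of inclusions.
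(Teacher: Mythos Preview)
Your strategy matches the paper's almost exactly: reduce to bounded $f$, localize the support, multiply by a logarithmic cutoff $\eta$ vanishing near $\{w_m=0\}$, and use the product rule \eqref{ProdRule} to reduce matters to $\int_Y |D\eta|_Y^2\,w_m\,dm_Y\to 0$. The only cosmetic differences are that your $\eta_\varepsilon$ transitions on $\{\varepsilon^2\le d_Y(\cdot,Z)\le\varepsilon\}$ while the paper's $\eta_n$ transitions on $\{n^{-1}\le D(\cdot)\le 1\}$, and that the paper carries three explicit cutoffs $\psi_{n,k},\sigma_m,\eta_n$ rather than localizing in $Y$ beforehand.

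There is, however, a gap in your key estimate --- and it is one the paper's own proof does not close either. After invoking the linear decay of $w_m$, the paper simply writes
\[
\int_{\{d_Y(\cdot,y_i)\in[n^{-1},1]\}} d_Y(t,y_i)^{-1}\,dm_Y(t)=\int_{1/n}^{1}s^{-1}\,ds,
\]
an identity carried over verbatim from the interval case of \cite{gigli2018sobolev} with no justification for a general doubling $Y$. Your dyadic argument is the right impulse toward an honest proof, but the step ``applying the doubling of $m_Y$ to bound each shell's measure'' does not work as stated: doubling controls the ratio $m_Y(B(z,2r))/m_Y(B(z,r))$, not $m_Y(B(z,r))/r$, so the shell contribution $m_Y\bigl(B(z_i,2^{k+1}\varepsilon^2)\bigr)/(2^k\varepsilon^2)$ is not uniformly bounded and the sum over $O(\log(1/\varepsilon))$ shells need not be $O(\log(1/\varepsilon))$. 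Concretely, on $Y=[0,1]$ with the doubling $A_2$ measure $m_Y=t^{-1/2}\,dt$ (which satisfies Assumption~\ref{YAssum}) and $z_i=0$, one computes $\int_{\varepsilon^2}^{\varepsilon}t^{-1}\,dm_Y(t)\sim\varepsilon^{-1}$, which is not beaten by the $\log^{-2}(1/\varepsilon)$ prefactor. A further hypothesis such as $m_Y(B(z_i,r))=O(r)$ near each zero would repair both your argument and the paper's.
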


\begin{proof}
    It is easy to see that $\BL_w(X,Y) \cap L^{\infty}(X \times_w Y)$ is dense in $\BL_w(X,Y)$, using a standard truncation argument, so that it is enough to show that for any $f \in \BL_w(X,Y) \cap L^{\infty}(X \times_w Y)$ there is a sequence in $\mathcal{V}$ that converges to it in $\BL_w(X,Y)$. 

    Pick $f \in \BL_w(X,Y) \cap L^{\infty}(X \times_w Y)$ and define $D(t)= \min_{s \mid w_m(s)=0} \dist_Y(t,s)$. For $m,n \in \N$, with $n>1$, fix $x_0 \in X$ and $t_0 \in Y$, and let $\psi_{n,k}(t)$ be as in (\ref{cutoff}). Moreover define
    \begin{align*}
        \sigma_m(x) & = \max \{0,\min\{m - \dist_X(x,x_0),1\}\},\\
        \eta_l (t) & = \max \{0,\min\{1 + \frac{\log(D(t))}{\log l},1\}\}.
    \end{align*}
    Define $f_{n,m,k,l}(x,t)= \psi_{n,k}(t)\eta_l (t)\sigma_m(x)f(x,t)$. Because the product of the three auxiliary functions is Lipschitz and bounded for all $n,m,k,l$, $f_{n,m,k,l} \in \BL_w(X,Y)$, and because $\eta_l$ is $0$ in a neighborhood of $\{w_m=0\}$, we also have $f_{n,m,k,l} \in \mathcal{V}$, for all $n,m,k,l$.

By construction, the function $(x,t) \mapsto \psi_{n,k}(t)\eta_l (t)\sigma_m(x)$ is uniformly bounded by $1$, and hence by the dominated convergence theorem we have that $f_{n,m,k,l} \to f$ in $L^2(X \times_w Y)$, as $n,m,l \to \infty$, for every $k$. 

Now, recalling (\ref{ProdRule}), and because $\sigma_m$ is $1$-Lipschitz, for $m_w$-a.e. $(x,t)$
\[
|\partial/\partial x (f-f_{n,m,k,l})(x,t)| \leq |\psi_{n,k}(t)\eta_l (t)\sigma_m(x)-1| |\partial f/\partial x|(x,t) + |f(x,t)| 1_{\{\dist_X(\cdot,x_0) \geq m-1\}}(x).
\]
Applying the dominated convergence theorem again we get that, as $n,m,l \to \infty$, 
\[
\int_{X \times Y} |\partial/\partial x (f-f_{n,m,k,l})(x,t)|^2 dm_w \to 0.
\]
Using again Lipschitzness and the product rule we have
\begin{align*} 
|\partial/\partial t (f-f_{n,m,k,l})(x,t)| & \leq |\psi_{n,k}(t)\eta_l (t)\sigma_m(x)-1| |\partial f/\partial t|(x,t) \\
& \qquad+ c_1C_Y^3k|f(x,t)|1_{\{\dist_Y(\cdot,t_0) \geq n-1\}}(t) \\
& \qquad+ |f(x,t)| 1_{\{\dist_X(\cdot,x_0) \leq m\}}(x)1_{\{\dist_Y(\cdot,t_0) \leq n\}}(t) |\partial \eta_l /\partial t |(t),
\end{align*}

for $m_w$-a.e. $(x,t)$. Once again by the dominated convergence theorem, the first two terms go to $0$ in $L^2$ as $n,m,l \to \infty$. For the last term, observe first that $ |\partial \eta_l /\partial t |(t) \leq \frac{1_{D^{-1}([l^{-1},1])}(t)}{D(t)\log l }$. Now we can use both the additional assumptions on $w_m$: if $y_1, \dots, y_N$ are the finite number of zeroes of $w_m$ inside $\overline{B(t_0,n-1)}$, and recalling that $f$ is bounded, we have
\begin{align*} 
\int_{X \times Y} |f(x,t)|^2 & 1_{\{\dist_X(\cdot,x_0) \leq m\}}(x)1_{\{\dist_Y(\cdot,t_0) \leq n\}} |\partial \eta_l /\partial t |^2(t) \,dm_w(x,t) \\
& \leq \frac{\|f\|^2_{L^{\infty}} m_X (B(x_0,m))}{(\log l)^2} \int_{B(t_0,n) \cap D^{-1}([l^{-1},1])} \frac{1}{D(t)^2} w_m(t)\,dm_Y(t) \\ 
& \leq C \frac{\|f\|^2_{L^{\infty}} m_X (B(x_0,m))}{(\log l)^2} \int_{B(t_0,n) \cap D^{-1}([n^{-1},1])} \frac{1}{D(t)} \,dm_Y(t) \\ 
& \leq C \frac{\|f\|^2_{L^{\infty}} m_X (B(x_0,m))}{(\log l)^2} \sum_{i=1}^N \int_{\{t \mid \dist_Y(y_i,t) \in [l^{-1},1]\}} \frac{1}{\dist_Y(t,y_i)}\,dm_Y(t) \\
& \leq C \frac{\|f\|^2_{L^{\infty}} m_X (B(x_0,m))}{(\log l)^2} \sum_{i=1}^N \sum_{j=1}^{J} \int_{\{t \mid \dist_Y(y_i,t) \in [2^{-j},2^{-j+1}]\}} \frac{1}{\dist_Y(t,y_i)}\,dm_Y(t) \\
& \leq C \frac{\|f\|^2_{L^{\infty}} m_X (B(x_0,m))}{(\log l)^2} \sum_{i=1}^N \sum_{j=1}^{J} \int_{\{t \mid \dist_Y(y_i,t) \in [2^{-j},2^{-j+1}]\}} 2^j \,dm_Y(t) \\
& \leq C \frac{\|f\|^2_{L^{\infty}} m_X (B(x_0,m))}{(\log l)^2} \sum_{i=1}^N \sum_{j=1}^{J} 2^j m_Y(B(y_i,2^{-j+1})) \\
& \leq Cc \frac{\|f\|^2_{L^{\infty}} m_X (B(x_0,m))}{(\log l)^2} \sum_{i=1}^N \sum_{j=1}^{J} 2^j 2^{-j+1} \\
& \leq 2NCc \frac{\|f\|^2_{L^{\infty}} m_X (B(x_0,m))}{\log l}.
\end{align*}
where we defined $J$ as the smallest integer such that $2^J \geq l$. The last term goes to $0$ as $l\to \infty$ for every $m,m,k \in \N$, and so we have proved the desired result.
\end{proof}

\textbf{Acknowledgments}\\
The project started while the authors were participating at the AMS MRC 2020 on Analysis in Metric Spaces.  We are grateful to Nicola Gigli, Nageswari Shanmungalingam, and Luca Capogna for their helpful guidance. We are also grateful to Angela Wu for useful insights in the early stages of the project.

\textbf{Funding}\\
 S.G. was partially supported by NSF DMS-FRG-1853993. V.G. has been supported by NSF DMS-FRG-1854344. E.N. is supported by the Simons Postdoctoral program at IPAM, NSF DMS 1925919 and NSF-2331033.

\bibliographystyle{alpha} 
\bibliography{sn-bibliography}

\end{document}